\documentclass{amsart}
\usepackage[english]{babel}
\usepackage{amsmath,amssymb,amsthm,amscd}
\usepackage{paralist,prettyref} 
\usepackage{hyperref}
\usepackage{color}

%
%
\newtheorem{thm}{Theorem}
\newtheorem{cor}[thm]{Corollary}
\newtheorem{lem}[thm]{Lemma}

\theoremstyle{definition}

\theoremstyle{remark}

\numberwithin{equation}{section}
\newrefformat{cor}{Corollary~\ref{#1}}
\newrefformat{defn}{Definition~\ref{#1}}
\newrefformat{eq}{(\ref{#1})}
\newrefformat{lem}{Lemma~\ref{#1}}
\newrefformat{prob}{problem~(\ref{#1})}
\newrefformat{prop}{Proposition~\ref{#1}}
\newrefformat{thm}{Theorem~\ref{#1}}
\newrefformat{rem}{Remark~\ref{#1}}

\newcommand{\bbC}{\mathbb C}

\newcommand{\bbE}{\mathbb E}
\newcommand{\bbF}{\mathbb F}

\newcommand{\bbN}{\mathbb N}

\newcommand{\bbR}{\mathbb R}
\newcommand{\bbX}{\mathbb X}
\newcommand{\bbY}{\mathbb Y}

%


\newcommand{\del}{\partial}


\newcommand{\eps}{\varepsilon}


\newcommand{\0}{\mspace{0mu}_0}       

\newcommand{\supp}{\operatorname{supp}} 


%

\providecommand{\abs}[1]{\left\lvert#1\right\rvert}
\providecommand{\norm}[1]{\lVert#1\rVert}

\begin{document}

\title[Kuznetsov's equation in $L_p$-spaces]{Global well-posedness and exponential stability for Kuznetsov's equation in $L_p$-spaces}


\author[S. Meyer]{Stefan Meyer}
\address{%
Martin-Luther-Universit\"{a}t Halle-Wittenberg\\
Naturwissenschaftliche Fakult\"{a}t II\\
Institut f\"{u}r Mathematik\\
06099 Halle (Saale)}
\email{stefan.meyer@mathematik.uni-halle.de}

\author[M. Wilke]{Mathias Wilke}
\address{%
Martin-Luther-Universit\"{a}t Halle-Wittenberg\\
Naturwissenschaftliche Fakult\"{a}t II\\
Institut f\"{u}r Mathematik\\
06099 Halle (Saale)}
\email{mathias.wilke@mathematik.uni-halle.de}

\subjclass[2010]{Primary: 35K59; Secondary: 35K51, 35Q35, 35B30, 35B35, 35B40, 35B45, 35B65.}

													
\keywords{Kuznetsov's equation, optimal regularity, quasilinear parabolic system, exponential stability}

\date{\today}

\begin{abstract}
We investigate a quasilinear initial-boundary value problem for Kuznetsov's equation with non-homogeneous Dirichlet boundary conditions.  This is a model in nonlinear acoustics which describes the propagation of sound in fluidic media with applications in medical ultrasound.  We prove that there exists a unique global solution which depends continuously on the sufficiently small data and that the solution and its temporal derivatives converge at an exponential rate as time tends to infinity.  Compared to the analysis of Kaltenbacher \& Lasiecka, we require optimal regularity conditions on the data and give simplified proofs which are based on maximal $L_p$-regularity for parabolic equations and the implicit function theorem.
\end{abstract}

\maketitle

\section{Introduction}

We are concerned with Kuznetsov's quasilinear equation
\begin{align*}
  u_{tt} - c^2\Delta_x u - b \Delta_x u_t &= k (u^2)_{tt}+\rho_0(v\cdot v)_{tt},
\end{align*}
which is a well-accepted equation in nonlinear acoustics and describes the propagation of sound in fluidic media.  The function $u(t,x)$ denotes the acoustic pressure fluctuation from an ambient value at time $t$ and position $x$.  Furthermore, $c>0$ denotes the velocity of sound, $b>0$ the diffusivity of sound and $k>0$ the parameter of nonlinearity.  The velocity fluctuation $v(t,x)$ is related to the pressure fluctuation by means of an acoustic potential $\psi(t,x)$, such that $u=\rho_0\psi_t$, $v=-\nabla\psi$, with ambient mass density $\rho_0$.

Kuznetsov's equation can be derived from the balances of mass and momentum (the compressible Navier-Stokes equations for a Newtonian fluid) and a state equation for the pressure-dependent density of the fluid, where terms of third or higher order in the fluctuations are neglected.  We refer to the monograph of M. Kaltenbacher \cite{Kal07} and Kuznetsov's article \cite{Kuz71} for the derivation.  Observe that Kuznetsov's equation degenerates if $1-2ku=0$, since $(u^2)_{tt}=2uu_{tt}+2(u_t)^2$, but for $\abs u < (2k)^{-1}$ the equation is parabolic.

Global well-posedness for the corresponding Dirichlet boundary value problem in an $L_2$-setting for the spatial dimension $n\in\{1,2,3\}$ was established by B. Kaltenbacher \& I. Lasiecka \cite{KaLa12} by means of appropriate energy estimates and Banach's fixed point theorem.  The purpose of the present paper is to extend these results to an $L_p$-setting for arbitrary dimensions and to provide shorter and more elegant proofs in an optimal functional analytic setting, in the sense that the regularity conditions on the initial and boundary data are both necessary and sufficient for the regularity of the solution.  We also impose appropriate smallness conditions to avoid the above mentioned degeneracy and to make use of the theory for quasilinear parabolic equations.

Let $\Omega\subset\bbR^n$ be a bounded domain with smooth boundary $\Gamma=\del\Omega$, let $J=(0,T)$ or $J=\bbR_+=(0,\infty)$, and let $u_0,u_1,v_0$ and $g$ be given functions on $\Omega$ and $J\times\Gamma$, respectively.  We consider the following initial-boundary value problem.
\begin{align}\label{prob:Kuznetsov}
   \left\{\begin{aligned}
      u_{tt} - c^2\Delta_x u - b \Delta_x u_t &= k (u^2)_{tt}+\rho_0(v\cdot v)_{tt} &&\quad\text{in }J\times\Omega,\\
      v_t&=-\rho_0^{-1}\nabla u&&\quad\text{in }J\times\Omega,\\
      u|_{\Gamma} &= g &&\quad\text{in }J\times\Gamma,\\
      (u(0),u_t(0)) &= (u_0,u_1) &&\quad\text{in }\Omega,\\
      v(0) &= v_0 &&\quad\text{in }\Omega.
   \end{aligned}\right.
\end{align}
Here $u:J\times\Omega\to\bbR$ is the unknown pressure fluctuation, $(u_0,u_1,v_0):\Omega\to\bbR^{2+n}$ are the given initial data and $g:J\times\Gamma\to\bbR$ is a given inhomogeneity on the boundary.

In this paper we prove existence and uniqueness of strong solutions in suitable subspaces of the Lebesgue space $L_p(J\times\Omega)$ with exponent $p>\max\{1,n/2\}$.  In the case $J=\bbR_+$ we are interested in solutions with exponential decay and therefore consider the functions $t\mapsto e^{\omega t} u(t)$, $t\mapsto e^{\omega t} v_t(t)$ with $\omega\geq 0$, which we abbreviate as $e^{\omega t} u$, $e^{\omega t} v_t$, respectively.  We say that $(u,v)$ is a strong solution to \eqref{prob:Kuznetsov} on $J$, if there is $\omega\geq 0$ such that
\begin{align}\label{eq:RegSol}
  \begin{aligned}
    e^{\omega t}u \in \bbE_u(J) &:= W^2_p(J;L_p(\Omega)) \cap W_p^1(J;W^2_p(\Omega)),\\
    v \in \bbE_v(J) &:= \{ v\in BUC^1(J;W_p^1(\Omega)^n), \\
     &\qquad e^{\omega t}v_t \in H^{3/2}_p(J;L_p(\Omega)^n)\cap W^1_p(J;W_p^1(\Omega)^n))\},
  \end{aligned}
\end{align}
and all equations in \eqref{prob:Kuznetsov} are satisfied pointwise almost everywhere.  Here the symbols $W^s_p$ and $H^s_p$ denote the Sobolev-Slobodeckij and Bessel potential spaces of order $s$ and exponent $p$, respectively, and $BUC^k$ denotes the space of functions having bounded and uniformly continuous derivatives up to order $k$.  The condition $e^{\omega t}u \in \bbE_u(J)$ will be written equivalently as $u \in e^{-\omega}\bbE_u(J)$.  In the case of a finite interval $J=(0,T)$, the exponential factor $e^{\omega t}$ can be dropped and we have $e^{-\omega}\bbE_u(J)=\bbE_u(J)$, for instance.  Our main result is the following.

\begin{thm}\label{thm:GLOBWP}
  Let $n\in\mathbb{N}$, $p>\max\{1,n/2\}$, $p\neq 3/2$, $J=(0,T)$ or $J=\bbR_+$ and define $\omega_0:=\min\{b\lambda_0/2,c^2/b\}>0$, where $\lambda_0>0$ denotes the smallest eigenvalue of the negative Dirichlet-Laplacian in $L_p(\Omega)$.  Then for every $\omega\in(0,\omega_0)$, there is $\rho>0$ such that problem \eqref{prob:Kuznetsov} admits a unique solution
  \begin{align}\label{eq:RegSol_Thm_1}
    (u,v) &\in e^{-\omega}\bbE_u(J) \times \bbE_v(J),
  \end{align}
  if the data $(g,u_0,u_1,v_0)$ satisfy the regularity and compatibility conditions
  \begin{align}\label{eq:RegData}
  \begin{split}
    & u_0 \in W^2_p(\Omega), \quad u_1 \in W_p^{2-2/p}(\Omega), \quad v_0 \in W_p^1(\Omega)^n, \\
     &e^{\omega t}g \in \bbF_g(J) := W^{2-1/2p}_p(J;L_p(\Gamma)) \cap W^1_p(J;W^{2-1/p}_p(\Gamma)), \\
   &\quad g(0)=u_0|_{\Gamma}, \quad \del_t g(0) = u_1|_{\Gamma} \;(\text{if}\ p>3/2),
   \end{split}
   \end{align}
   and the smallness condition
  \begin{align*}
    \norm{g}_{e^{-\omega}\bbF_g} + \|u_0\|_{W^2_p}+\|u_1\|_{W^{2-2/p}_p} + \|v_0\|_{W_p^1} <\rho.
  \end{align*}
  The conditions \eqref{eq:RegData} are necessary for \eqref{eq:RegSol_Thm_1} and the solution depends continuously (even analytically) on the data with respect to the corresponding norms.  In the case $J=\bbR_+$, the solution $(u(t),v(t))$ converges exponentially to $(0,v_\infty)$ as $t\to\infty$, $v_\infty := v_0 - \rho_0^{-1} \int_0^\infty\nabla u(s)ds$, in the sense that
  \begin{align*}
    \norm{u(t)}_{W^2_p} + \norm{u_t(t)}_{W^{2-2/p}_p} + \norm{v(t)-v_\infty}_{W^1_p} + \norm{v_t(t)}_{W^1_p} \leq C e^{-\omega t}, \quad t\geq 0,
  \end{align*}
  for some $C\geq 0$ depending on $g,u_0,u_1,v_0$.
\end{thm}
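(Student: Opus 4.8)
\quad The plan is to recast \eqref{prob:Kuznetsov} as a zero-finding problem for an analytic map between Banach spaces and to apply the analytic implicit function theorem, the linear ingredient being maximal $L_p$-regularity for the strongly damped wave operator in exponentially weighted spaces. First I expand the nonlinearity: since $(u^2)_{tt}=2uu_{tt}+2(u_t)^2$ and, using $v_t=-\rho_0^{-1}\nabla u$ (hence $v_{tt}=-\rho_0^{-1}\nabla u_t$), also $(v\cdot v)_{tt}=2\rho_0^{-2}\abs{\nabla u}^2-2\rho_0^{-1}\,v\cdot\nabla u_t$, the first line of \eqref{prob:Kuznetsov} reads
\[
  u_{tt}-c^2\Delta_x u-b\Delta_x u_t
  = 2k\,u\,u_{tt}+2k\,(u_t)^2+2\rho_0^{-1}\abs{\nabla u}^2-2\,v\cdot\nabla u_t
  =: N(u,v),
\]
whose right-hand side is a sum of products of components of the unknown, hence at least quadratic; in particular $N(0,0)=0$ and $DN(0,0)=0$. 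Using the Sobolev embeddings available for $p>\max\{1,n/2\}$ — in particular $\bbE_u(J)\hookrightarrow C(\bar J;W^2_p(\Omega))\hookrightarrow L_\infty(J\times\Omega)$, the trace embedding $\bbE_u(J)\hookrightarrow BUC(\bar J;W^{2-2/p}_p(\Omega))$, and the anisotropic estimate $W^1_p(J;L_p(\Omega))\cap L_p(J;W^2_p(\Omega))\hookrightarrow L_{2p}(J\times\Omega)$ — one checks that $N$ maps $e^{-\omega}\bbE_u(J)\times\bbE_v(J)$ analytically into $e^{-\omega}L_p(J;L_p(\Omega))$; the weight is used only on $J=\bbR_+$, to absorb the otherwise non-integrable quadratic terms. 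Next I reduce to homogeneous data: by the standard trace theorem for $\bbE_u(J)$ there is $u_*\in e^{-\omega}\bbE_u(J)$ with $u_*|_\Gamma=g$, $u_*(0)=u_0$, $\del_t u_*(0)=u_1$ — possible \emph{exactly} under the compatibility conditions of \eqref{eq:RegData} (for $p=3/2$ a borderline trace space is ruled out, and for $p<3/2$ the second condition is vacuous), so those conditions are seen to be necessary — and one sets $v_*:\equiv v_0\in\bbE_v(J)$; writing $u=u_*+\tilde u$, $v=v_*+\tilde v$ turns \eqref{prob:Kuznetsov} into a problem for $(\tilde u,\tilde v)$ with vanishing Dirichlet trace and vanishing initial values, of the form $\calL(\tilde u,\tilde v)=R+\widetilde N(\tilde u,\tilde v)$, where $\calL(u,v):=(u_{tt}-c^2\Delta_x u-b\Delta_x u_t,\ v_t+\rho_0^{-1}\nabla u)$, and both the data term $R$ (analytic in the data) and $\widetilde N$ vanish for zero data and zero $(\tilde u,\tilde v)$.

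The heart of the matter — and the main obstacle — is to show that $\calL$ is an isomorphism from ${}_0\bbE_u(J)\times{}_0\bbE_v(J)$ (the subspaces carrying homogeneous boundary and initial data) onto $e^{-\omega}L_p(J;L_p(\Omega))$ times the corresponding space for $v_t$ from \eqref{eq:RegSol}, and, correspondingly, that the operator obtained by adjoining to $\calL$ the Dirichlet trace and the initial values $(u(0),u_t(0),v(0))$ is an isomorphism from $e^{-\omega}\bbE_u(J)\times\bbE_v(J)$ onto the data space of \eqref{eq:RegData}. For the $u$-equation this is maximal $L_p$-regularity of the strongly damped wave operator $\del_t^2+bA\del_t+c^2A$, with $A:=-\Delta_x$ the Dirichlet Laplacian — a positive operator on $L_p(\Omega)$ with bounded $H^\infty$-calculus (hence $R$-sectorial) and smallest eigenvalue $\lambda_0$. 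I would obtain it by factoring the operator pencil
\[
  \mu^2+b\mu A+c^2A=(\mu+B_+)(\mu+B_-),\qquad B_\pm:=\tfrac12\bigl(bA\pm(b^2A^2-4c^2A)^{1/2}\bigr),
\]
which on the spectral subspace of $A$ corresponding to eigenvalues $\lambda\ge 4c^2/b^2$ (whose complement in $L_p(\Omega)$ is finite-dimensional) decouples the second-order equation into two parabolic Cauchy problems — $B_+$ being $R$-sectorial of the order of $A$ and $B_-$ bounded — each of which has maximal $L_p$-regularity, the complementary finite-dimensional part being elementary. The exponential weight conjugates $\del_t$ to $\del_t-\omega$, i.e. turns the pencil into $(\mu-\omega)^2+bA(\mu-\omega)+c^2A=(\mu+(B_+-\omega))(\mu+(B_--\omega))$; both factors remain invertible and $R$-sectorial, so the weighted maximal regularity persists, precisely as long as $\omega<\inf\Re\sigma(B_\pm)$. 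Tracing the eigenvalues $\tfrac12(b\lambda\pm\sqrt{b^2\lambda^2-4c^2\lambda})$ as $\lambda$ runs over $\sigma(A)\subset[\lambda_0,\infty)$ — for $\lambda<4c^2/b^2$ they are complex conjugates with real part $b\lambda/2$, and for $\lambda\ge 4c^2/b^2$ they are real and lie in $(c^2/b,\infty)$, tending to $c^2/b$ as $\lambda\to\infty$ — gives $\inf\Re\sigma(B_\pm)=\min\{b\lambda_0/2,c^2/b\}=\omega_0$, which explains the threshold. Finally the $v$-equation is solved by quadrature, $v=v_0-\rho_0^{-1}\int_0^t\nabla u(s)\,ds$, and $v\in\bbE_v(J)$ follows since $u\in\bbE_u(J)$ yields, via the mixed derivative theorem, $\nabla u\in H^{3/2}_p(J;L_p(\Omega)^n)\cap W^1_p(J;W^1_p(\Omega)^n)\cap C(\bar J;W^1_p(\Omega)^n)$; this is precisely the origin of the exponent $3/2$ in the definition of $\bbE_v(J)$.

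With $\calL$ shown to be an isomorphism, the reduced problem takes the form $\Phi\bigl((\tilde u,\tilde v),d\bigr):=\calL(\tilde u,\tilde v)-R(d)-\widetilde N(\tilde u,\tilde v)=0$ with $d=(g,u_0,u_1,v_0)$, where $\Phi$ is analytic near the origin, $\Phi(0,0)=0$, and $D_{(\tilde u,\tilde v)}\Phi(0,0)=\calL$ is invertible. The analytic implicit function theorem then provides, for every $d$ in a sufficiently small ball of radius $\rho$, a unique small $(\tilde u,\tilde v)$ depending analytically on $d$, hence a unique solution $(u,v)\in e^{-\omega}\bbE_u(J)\times\bbE_v(J)$ of \eqref{prob:Kuznetsov} depending analytically — in particular continuously — on the data; shrinking $\rho$ further makes $\norm{u}_{L_\infty(J\times\Omega)}<(2k)^{-1}$, so that no degeneracy occurs. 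Necessity of \eqref{eq:RegData} for \eqref{eq:RegSol_Thm_1} is just the continuity of the trace and initial-value operators on $\bbE_u(J)$ and $\bbE_v(J)$, already used in the reduction step.

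It remains to read off the exponential decay when $J=\bbR_+$. Standard trace and interpolation estimates for maximal-regularity spaces on the half-line give $\bbE_u(\bbR_+)\hookrightarrow C_0(\bbR_+;W^2_p(\Omega))$ and $W^1_p(\bbR_+;L_p(\Omega))\cap L_p(\bbR_+;W^2_p(\Omega))\hookrightarrow C_0(\bbR_+;W^{2-2/p}_p(\Omega))$; applied to $e^{\omega t}u$ and $\del_t(e^{\omega t}u)$, these yield $\norm{u(t)}_{W^2_p}+\norm{u_t(t)}_{W^{2-2/p}_p}\le Ce^{-\omega t}$. Then $v_t(t)=-\rho_0^{-1}\nabla u(t)$ gives $\norm{v_t(t)}_{W^1_p}\le\rho_0^{-1}\norm{u(t)}_{W^2_p}\le Ce^{-\omega t}$, so $v_t\in L_1(\bbR_+;W^1_p(\Omega)^n)$ and $v(t)\to v_\infty=v_0-\rho_0^{-1}\int_0^\infty\nabla u(s)\,ds$ in $W^1_p(\Omega)^n$ with
\[
  \norm{v(t)-v_\infty}_{W^1_p}=\rho_0^{-1}\Bigl\|\textstyle\int_t^\infty\nabla u(s)\,ds\Bigr\|_{W^1_p}\le\rho_0^{-1}\int_t^\infty\norm{u(s)}_{W^2_p}\,ds\le\tfrac{C}{\omega}\,e^{-\omega t},
\]
which together with the preceding bounds is the asserted estimate. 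In short, the whole argument is soft once the weighted maximal-regularity isomorphism of the second paragraph — together with the precise value of $\omega_0$ — has been established; that is where the real work lies.
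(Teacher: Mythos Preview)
Your proposal is correct and follows essentially the same route as the paper: set up an analytic map, verify the quadratic nonlinearities land in $e^{-\omega}L_p$ via the mixed-derivative embeddings for $p>\max\{1,n/2\}$, invoke maximal $L_p$-regularity of the strongly damped wave operator in exponentially weighted spaces to make the linearization an isomorphism, and apply the analytic implicit function theorem. The only notable differences are presentational: the paper takes $u_*$ to be the \emph{solution} of the linearized problem \eqref{prob:Kuznetsov_Linearized} with data $(0,g,u_0,u_1)$ (established in \prettyref{lem:Max_Reg_for_Linearized_Problem}, with the inhomogeneous Dirichlet datum handled via \prettyref{lem:Inhom_bdy_data}) rather than a generic trace extension, and it outsources the weighted maximal-regularity isomorphism and the identification of $\omega_0$ to \cite[Theorem~2.5]{MeyWi11}, whereas you sketch this directly via the pencil factorization $\mu^2+b\mu A+c^2A=(\mu+B_+)(\mu+B_-)$ and the spectral analysis of $B_\pm$ --- which is in fact the argument underlying that cited result.
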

This theorem extends the results of Kaltenbacher \& Lasiecka \cite[Theorem 1.1 and Theorem 1.2]{KaLa12} in several ways.  First, they only consider the case $p=2$ and $n\in\{1,2,3\}$.  Second, the solution space in the case $J=(0,T)$ is given by
\begin{align}\label{eq:KaLa_solution_space}
  u\in C^2([0,T];L_2(\Omega)) \cap W^2_2(0,T;W^1_2(\Omega)) \cap C([0,T];W^2_2(\Omega))
\end{align}
and the initial data $(u_0,u_1)$ must satisfy the condition $\norm{u_0}_{W^2_2} + \norm{u_1}_{W^2_2} < \rho$
for some (small) $\rho>0$, as a consequence of the assumptions on $(u_0,u_1,g)$.  Most notably, the condition $u_1\in W^2_2(\Omega)$ is not necessary for \eqref{eq:KaLa_solution_space}.  In the case $p=2$ we only require that $\norm{u_0}_{W^2_2} + \norm{u_1}_{W^1_2} < \rho$, where the condition $u_1\in W^1_2(\Omega)$ is necessary, since, by the properties of the temporal trace (see Section 2.2), our solution $u$ must satisfy
\begin{align*}
  u\in C^1([0,T];W^1_2(\Omega)) \cap C([0,T];W^2_2(\Omega)).
\end{align*}
Thirdly, Kaltenbacher \& Lasiecka require the condition
\begin{align*}
  g \in W^3_2(J;W^{-3/2}_2(\Gamma)) \cap W^2_2(J;W^{1/2}_2(\Gamma)) \cap W^1_2(J;W^{3/2}_2(\Gamma)), \, g_{tt}(0)\in W^{-1/2}_2(\Gamma),
\end{align*}
which is also not necessary in view of $u|_\Gamma=g$.  We can simplify and extend it to
\begin{align*}
  g \in W^{7/4}_2(J;L_2(\Gamma)) \cap W^1_2(J;W^{3/2}_2(\Gamma)),
\end{align*}
which is necessary for $u\in\bbE_u(J)$ by the properties of the spatial trace (see Section 2.2).  
Finally, Kaltenbacher \& Lasiecka use that the velocity is given by
\begin{align*}
  v(t,x)=-\frac 1 {\rho_0} \nabla \left(\int_0^t u(s,x)ds+U_0(x)\right), \quad t\geq 0,
\end{align*}
and require that $U_0\in W^2_2(\Omega)\cap \mathring W^1_2(\Omega)$ solves the elliptic problem
\begin{align*}
  -c^2\Delta U_0 - \sigma\nabla u_0\cdot\nabla U_0 &= - (1-2k u_0)u_1 + b\Delta u_0 \text{ in } \Omega, \quad U_0|_\Gamma = 0 \text{ on }\Gamma.
\end{align*}
This implies that $v_0 = -\rho_0^{-1}\nabla U_0$ depends on $u_0, u_1$ and is small in $W^1_2(\Omega)$.  We are able to remove the dependence of $v_0$ on the initial values $u_0$ and $u_1$.

Besides these extensions we point out that in the case $J=\bbR_+$ we always impose some exponential decay on $g$ whereas Kaltenbacher \& Lasiecka impose smallness conditions on the primitive of $g$ instead.

Compared to \cite[Theorem 1.3]{KaLa12}, \prettyref{thm:GLOBWP} does not imply that $\norm{u_{tt}(t)}_{L_p}\to 0$ exponentially as $t\to\infty$, since $\bbE_u(\bbR_+)$ is not contained in $BUC^2(\bbR_+;L_p(\Omega))$.  To obtain such higher order differentiability, we impose additional regularity conditions on $g$ in the following result.
\begin{thm}\label{thm:regularity_and_decay}
Suppose that the conditions of Theorem \ref{thm:GLOBWP} are satisfied for $J=\bbR_+$ and assume in addition that there exists $\omega_g>\omega$ such that 
\begin{align*}
  g, \, [t\mapsto t \del_t g(t)], \,\ldots\,, [t\mapsto t^k \del_t^k g(t)] \in e^{-\omega_g}\bbF_g(\bbR_+) \quad\text{for some }k\geq 1.
\end{align*}
Then the solution $(u,v)$  of \eqref{prob:Kuznetsov} on $\bbR_+$ satisfies
$$(\partial_t^{j}u,\partial_t^{j}v)\in e^{-\omega}\bbE_u([\delta,\infty))\times \bbE_v([\delta,\infty)),$$
for each $\delta>0$ and all $j\in\{0,\ldots,k\}$.  Moreover,
\begin{align*}
  \norm{u(t)}_{W^2_p} + \norm{\del_t u(t)}_{W^2_p} + \cdots + \norm{\del^k_t u(t)}_{W^2_p} + \norm{\del_t^{k+1} u(t)}_{W^{2-2/p}_p} &\leq C e^{-\omega t}, \quad t\geq \delta,\\
  \norm{v(t)-v_\infty}_{W^1_p} + \norm{\del_t v(t)}_{W^1_p} + \ldots + \norm{\del_t^{k+1} v(t)}_{W^1_p} &\leq C e^{-\omega t}, \quad t\geq \delta,
\end{align*}
with some $C\geq 0$ depending on $g,u_0,u_1,v_0$ and $\delta$.
\end{thm}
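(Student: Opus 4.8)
The plan is to upgrade the solution of \prettyref{thm:GLOBWP} by the ``parameter trick'' (Angenent's time-scaling argument) in the exponentially weighted setting, and then to extract the decay from the standard embeddings for maximal-regularity spaces together with the structural identity $v_t=-\rho_0^{-1}\nabla u$.

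\emph{Set-up.} Let $(u,v)$ be the global solution on $\bbR_+$ from \prettyref{thm:GLOBWP}, realized as in its proof as the unique small zero of an analytic map $\Psi\colon\calD\times\bigl(e^{-\omega}\bbE_u(\bbR_+)\times\bbE_v(\bbR_+)\bigr)\to\bbY$ encoding all the relations in \eqref{prob:Kuznetsov}, with $\calD$ the data space, $\bbY$ the space of right-hand sides, and $D_{(u,v)}\Psi$ an isomorphism at the solution by maximal $L_p$-regularity. For $\lambda$ near $1$ I rescale time: $(\tilde u,\tilde v):=(u(\lambda\cdot),v(\lambda\cdot))$ satisfies
\begin{align*}
  \tilde u_{tt}-c^2\lambda^2\Delta_x\tilde u-b\lambda\Delta_x\tilde u_t=k(\tilde u^2)_{tt}+\rho_0(\tilde v\cdot\tilde v)_{tt},\qquad \tilde v_t=-\lambda\rho_0^{-1}\nabla\tilde u\quad\text{in }\bbR_+\times\Omega,
\end{align*}
with $\tilde u|_\Gamma=g(\lambda\cdot)$, $(\tilde u(0),\tilde u_t(0))=(u_0,\lambda u_1)$, $\tilde v(0)=v_0$; I denote the associated residual map by $\Psi_\lambda$, so that $\Psi_1=\Psi$. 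Since the coefficients $\lambda^2,\lambda$ are polynomial in $\lambda$ and the nonlinearities polynomial in $(\tilde u,\tilde v)$, $\Psi_\lambda$ is analytic in $(\tilde u,\tilde v)$ and, apart from the boundary term, smooth in $\lambda$.

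\emph{Differentiability in $\lambda$ and the implicit function theorem.} The only delicate dependence is $\lambda\mapsto g(\lambda\cdot)$, which must be $C^k$ from a neighbourhood of $1$ into $e^{-\omega}\bbF_g(\bbR_+)$. Its $j$-th formal $\lambda$-derivative is $[t\mapsto t^j\del_t^j g(\lambda t)]$; substituting $s=\lambda t$ and writing $e^{\omega s/\lambda}s^j\del_s^j g=e^{(\omega/\lambda-\omega_g)s}\,e^{\omega_g s}s^j\del_s^j g$, one uses that $e^{\omega_g s}s^j\del_s^j g\in\bbF_g(\bbR_+)$ for $j\le k$ by hypothesis, while $\omega<\omega_g$ makes the prefactor $e^{(\omega/\lambda-\omega_g)s}$ bounded together with all its derivatives for $\lambda>\omega/\omega_g$; since time dilation acts boundedly on the Sobolev--Slobodeckij and Bessel potential spaces occurring in $\bbF_g$, locally uniformly in $\lambda>0$, this gives the required $C^k$-dependence — this is exactly where the hypothesis $\omega_g>\omega$ is used. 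Hence $\Psi_\lambda$ is $C^k$ in $\lambda$, analytic in $(\tilde u,\tilde v)$, $\Psi_1((u,v))=0$, and $D_{(\tilde u,\tilde v)}\Psi_1=D_{(u,v)}\Psi$ is invertible, so the implicit function theorem provides $\eps>0$ and a $C^k$ curve $\lambda\mapsto(\hat u_\lambda,\hat v_\lambda)\in e^{-\omega}\bbE_u(\bbR_+)\times\bbE_v(\bbR_+)$, $\lambda\in(1-\eps,1+\eps)$, with $(\hat u_1,\hat v_1)=(u,v)$, $\Psi_\lambda((\hat u_\lambda,\hat v_\lambda))=0$, and uniqueness near $(u,v)$. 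For $\lambda\ge1$ the rescaled pair $(u(\lambda\cdot),v(\lambda\cdot))$ again lies in $e^{-\omega}\bbE_u(\bbR_+)\times\bbE_v(\bbR_+)$: indeed $e^{\omega t}u(\lambda t)=e^{-(\lambda-1)\omega t}\,\bigl(e^{\omega s}u(s)\bigr)\big|_{s=\lambda t}$ is a bounded dilation of $e^{\omega\cdot}u\in\bbE_u(\bbR_+)$ times the factor $e^{-(\lambda-1)\omega t}$, which for $\lambda\ge1$ is bounded with bounded derivatives (and similarly for $v$), and it solves $\Psi_\lambda=0$; by continuity of the dilation at $\lambda=1$ and local uniqueness, $(\hat u_\lambda,\hat v_\lambda)=(u(\lambda\cdot),v(\lambda\cdot))$ for $\lambda\in[1,1+\eps)$ after shrinking $\eps$.

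\emph{Bootstrapping and decay.} Differentiating the identity $\hat u_\lambda=u(\lambda\cdot)$ at $\lambda=1$ from the right, and using at the $j$-th stage that the difference quotients converge both in $e^{-\omega}\bbE_u(\bbR_+)$ (from the $C^k$-regularity of the curve) and in $L_p$ on each $[\delta,R]\times\Omega$ to $t\del_t$ applied to the $(j-1)$-st term — which is meaningful because $t^{j-1}\del_t^{j-1}u\in e^{-\omega}\bbE_u(\bbR_+)$ is locally of class $W^1_p$ in time — one obtains inductively
\begin{align*}
  t^j\del_t^j u\in e^{-\omega}\bbE_u(\bbR_+),\qquad j=0,1,\dots,k,
\end{align*}
whence $\del_t^j u\in e^{-\omega}\bbE_u([\delta,\infty))$ for every $\delta>0$ (multiply by the smooth bounded multiplier $t^{-j}$). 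From $v_t=-\rho_0^{-1}\nabla u$ one has $\del_t^j v=-\rho_0^{-1}\nabla\del_t^{j-1}u$ for $j\ge1$; the mapping properties of $\nabla$ (from $W^2_p$ to $W^1_p$ and from $L_p$ to $W^{-1}_p$) together with $W^1_p(\cdot;W^1_p)\cap W^2_p(\cdot;W^{-1}_p)\hookrightarrow H^{3/2}_p(\cdot;L_p)$ then place $\del_t^j v\in\bbE_v([\delta,\infty))$ for $1\le j\le k$, which with the above gives the first assertion. For the estimates, $\bbE_u\hookrightarrow W^1_p(\cdot;W^2_p(\Omega))\hookrightarrow BUC(\cdot;W^2_p(\Omega))$ (valid since $p>1$), so $\norm{\del_t^j u(t)}_{W^2_p}\le Ce^{-\omega t}$ for $0\le j\le k$, $t\ge\delta$; moreover $\del_t^{k+1}u=\del_t(\del_t^k u)$ with $e^{\omega\cdot}\del_t^k u\in W^2_p(\cdot;L_p)\cap W^1_p(\cdot;W^2_p)$ yields $e^{\omega\cdot}\del_t^{k+1}u\in W^1_p(\cdot;L_p)\cap L_p(\cdot;W^2_p)\hookrightarrow BUC(\cdot;W^{2-2/p}_p)$, hence $\norm{\del_t^{k+1}u(t)}_{W^{2-2/p}_p}\le Ce^{-\omega t}$; finally $\norm{\del_t^j v(t)}_{W^1_p}\le C\norm{\del_t^{j-1}u(t)}_{W^2_p}\le Ce^{-\omega t}$ for $1\le j\le k+1$, and, since $v(t)-v_\infty=\rho_0^{-1}\int_t^\infty\nabla u(s)\,ds$, $\norm{v(t)-v_\infty}_{W^1_p}\le C\int_t^\infty e^{-\omega s}\,ds\le Ce^{-\omega t}$.

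\emph{Main obstacle.} The technical core is the second step: showing that the time-rescaled residual $\Psi_\lambda$ is $C^k$ in $\lambda$ \emph{into the exponentially weighted target space}, where the hypotheses $\omega_g>\omega$ and $t^j\del_t^j g\in e^{-\omega_g}\bbF_g(\bbR_+)$ are precisely what make the boundary term behave, and then converting the abstract $C^k$-regularity of $\lambda\mapsto(\hat u_\lambda,\hat v_\lambda)$ into the pointwise identity $\del_\lambda^j\hat u_\lambda|_{\lambda=1}=t^j\del_t^j u$ via locally convergent difference quotients; the remaining passages are routine trace, embedding and interpolation arguments.
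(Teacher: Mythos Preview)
The proposal is correct and follows essentially the same route as the paper: Angenent's time-scaling trick, the implicit function theorem applied to the rescaled system at the known solution, and the identification $\partial_\lambda^j|_{\lambda=1}=t^j\partial_t^j$, followed by the standard embeddings $\bbE_u\hookrightarrow BUC(\cdot;W^2_p)$ and the temporal trace for the decay estimates. The paper differs only in packaging: it isolates the $C^k$-smoothness of the nonlinear map and the invertibility of its linearization at $(1,u^*,v^*)$ into a separate lemma (proving the latter by a Neumann series, since the linearization contains lower-order terms with the small solution as coefficients), and it identifies the implicit curve with $(u(\lambda\cdot),v(\lambda\cdot))$ on a full two-sided neighborhood via uniqueness rather than restricting to $\lambda\ge 1$ as you do.
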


Finally, we are interested in the case where the inhomogeneity $g$ vanishes except for a finite time interval $(0,T)$.  Then the solution becomes smooth with respect to time on every interval $(T+\delta,\infty)$, $\delta>0$, and all temporal derivatives decay exponentially:
\begin{cor}\label{cor:g_finite_time_interval}
  Suppose that the conditions of \prettyref{thm:GLOBWP} are satisfied for $J=\bbR_+$ and assume in addition that $g(t)=0$ for all $t>T$ with some $T>0$.  Then for every $\omega\in(0,\omega_0)$ there exists $\tilde\rho \leq\rho$ such that for
  \begin{align*}
    \norm{g}_{e^{-\omega}\bbF_g} + \|u_0\|_{W^2_p}+\|u_1\|_{W^{2-2/p}_p} + \|v_0\|_{W_p^1} <\tilde\rho,
  \end{align*}
  the solution $(u,v)$ of \eqref{prob:Kuznetsov} on $\bbR_+$ satisfies
$$(\partial_t^{j}u,\partial_t^{j}v)\in e^{-\omega}\bbE_u([T+\delta,\infty))\times \bbE_v([T+\delta,\infty))$$
for all $j\in\bbN_0$ and every $\delta>0$ and there exist $C_j\geq 0$ such that
\begin{align*}
  \norm{\del^j_t u(t)}_{W^2_p} + \norm{\del_t^j v(t)}_{W^1_p} &\leq C_j e^{-\omega t}, \quad t\geq T+\delta, \, j\in\bbN.
\end{align*}
\end{cor}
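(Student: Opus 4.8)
The plan is to deduce \prettyref{cor:g_finite_time_interval} from \prettyref{thm:regularity_and_decay} by reposing the problem with initial time $T$. Let $(u,v)\in e^{-\omega}\bbE_u(\bbR_+)\times\bbE_v(\bbR_+)$ be the global solution supplied by \prettyref{thm:GLOBWP}, and put $\tilde u(t):=u(t+T)$, $\tilde v(t):=v(t+T)$ for $t\geq 0$. Since the coefficients in \eqref{prob:Kuznetsov} do not depend on time and $g(t)=0$ for $t>T$, the pair $(\tilde u,\tilde v)$ solves \eqref{prob:Kuznetsov} on $\bbR_+$ with boundary datum $\tilde g:=g(\cdot+T)\equiv 0$ and initial data $(\tilde u_0,\tilde u_1,\tilde v_0):=(u(T),u_t(T),v(T))$; moreover $(\tilde u,\tilde v)\in e^{-\omega}\bbE_u(\bbR_+)\times\bbE_v(\bbR_+)$, because these spaces are stable under restriction to $[T,\infty)$ and translation (the exponential weight only picks up the constant factor $e^{\omega T}$).

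First I would verify that the reposed problem is covered by \prettyref{thm:GLOBWP}. Its boundary datum $\tilde g\equiv 0$ is admissible, and the temporal trace properties of $\bbE_u$ and $\bbE_v$ at the point $T$ (Section 2.2) give $\tilde u_0\in W^2_p(\Omega)$, $\tilde u_1\in W^{2-2/p}_p(\Omega)$, $\tilde v_0\in W^1_p(\Omega)^n$. The compatibility conditions of the reposed problem read $u(T)|_\Gamma=\tilde g(0)=0$ and, for $p>3/2$, $u_t(T)|_\Gamma=\del_t\tilde g(0)=0$; both hold since $u|_\Gamma=g$ (hence $u_t|_\Gamma=\del_t g$ in the trace sense), $g\equiv 0$ on $(T,\infty)$, and the traces $t\mapsto g(t)|_\Gamma$ and (for $p>3/2$) $t\mapsto\del_t g(t)|_\Gamma$ are continuous at $t=T$ for $g\in\bbF_g(\bbR_+)$. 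For the smallness I would use the continuous (even analytic) dependence of the solution on the data together with the fact that trivial data produce the trivial solution: writing $\calN:=\norm{g}_{e^{-\omega}\bbF_g}+\|u_0\|_{W^2_p}+\|u_1\|_{W^{2-2/p}_p}+\|v_0\|_{W^1_p}$, this yields
\begin{align*}
  \|\tilde u_0\|_{W^2_p}+\|\tilde u_1\|_{W^{2-2/p}_p}+\|\tilde v_0\|_{W^1_p}
  \;\lesssim\; \|u\|_{e^{-\omega}\bbE_u(\bbR_+)}+\|v\|_{\bbE_v(\bbR_+)}
  \;\lesssim\; \calN,
\end{align*}
where the first inequality is boundedness of the temporal trace at $T$. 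Hence there is $\tilde\rho\leq\rho$ such that $\calN<\tilde\rho$ forces the data of the reposed problem strictly below the smallness threshold $\rho$ of \prettyref{thm:GLOBWP}, and by uniqueness the reposed solution equals $(\tilde u,\tilde v)$.

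Finally, \prettyref{thm:regularity_and_decay} requires in addition that $\tilde g,\,[t\mapsto t\del_t\tilde g(t)],\ldots,[t\mapsto t^k\del_t^k\tilde g(t)]$ lie in $e^{-\omega_g}\bbF_g(\bbR_+)$ for some $\omega_g>\omega$ and some $k\geq 1$. Since $\tilde g\equiv 0$, all of these functions are $0$, so the hypothesis holds vacuously for \emph{every} $k\in\bbN$, e.g.\ with any fixed $\omega_g\in(\omega,\omega_0)$. Applying \prettyref{thm:regularity_and_decay} (with $k$ arbitrary) therefore gives $(\del_t^j\tilde u,\del_t^j\tilde v)\in e^{-\omega}\bbE_u([\delta,\infty))\times\bbE_v([\delta,\infty))$ for every $\delta>0$ and all $j\in\bbN_0$, together with $\|\del_t^j\tilde u(t)\|_{W^2_p}+\|\del_t^j\tilde v(t)\|_{W^1_p}\leq C_j e^{-\omega t}$ for $t\geq\delta$ and $j\in\bbN$. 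Undoing the time shift $t\mapsto t-T$ converts these statements into the assertions of the corollary, the decay constants being multiplied by the innocuous factor $e^{\omega T}$. I expect the only slightly delicate point to be the bookkeeping of the second paragraph — checking that the reposed data is simultaneously admissible (right spaces, right compatibility conditions at time $T$) and small — which rests on the trace theory of Section 2.2 and on the quantitative continuous dependence from \prettyref{thm:GLOBWP}. The conceptual core of the argument, by contrast, is the trivial observation that a boundary datum which vanishes after time $T$ automatically satisfies the extra hypothesis of \prettyref{thm:regularity_and_decay} once the problem is reposed at $T$, so that the smoothing-and-decay result applies on $[T+\delta,\infty)$ for all $\delta>0$ without any further work.
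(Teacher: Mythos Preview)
Your proof is correct and follows essentially the same approach as the paper: solve on $\bbR_+$ via \prettyref{thm:GLOBWP}, use the temporal trace theorem and continuous dependence to control $(u(T),u_t(T),v(T))$ by the original data so that the reposed problem at time $T$ is again in the small-data regime, and then apply \prettyref{thm:regularity_and_decay} with $\tilde g\equiv 0$ (which trivially satisfies the extra hypothesis for every $k$). Your write-up is in fact more careful than the paper's, which skips the explicit verification of the compatibility conditions at $t=T$ and the smallness bookkeeping that you spell out.
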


This paper is organized as follows. In Section 2 we introduce the notation and we provide some results concerning maximal $L_p$-regularity, trace-theory and analytic mappings in Banach spaces. In Section 3 we study a linearized version of \eqref{prob:Kuznetsov} and prove an optimal regularity result for this linear problem in \prettyref{lem:Max_Reg_for_Linearized_Problem}. The proof of \prettyref{thm:GLOBWP} is given in Section 4. We employ the implicit function theorem combined with the results in Section 3. Finally, in Section 5 we prove  \prettyref{thm:regularity_and_decay} by applying a parameter trick which goes back to Angenent \cite{Ang90} together with the implicit function theorem.

\section{Notation and preliminaries}

In this paper, the symbol $\Omega$ always denotes a bounded domain in $\bbR^n$ with smooth boundary $\Gamma=\del\Omega$ and $J$ denotes the interval $(0,T)$ or $\bbR_+=(0,\infty)$.  $BUC^k(\Omega)$ is the space of $k$-times Fr\'{e}chet-differentiable functions whose derivatives are bounded and uniformly continuous in $\Omega$ and $C^\infty_c(\mathbb{R}^n)$ denotes the space of smooth functions $\phi$ with compact support $\supp \phi\subset \mathbb{R}^n$.   Moreover, $L_p(\Omega)$ denotes the Lebesgue space of exponent $p\in [1,\infty]$, $W^s_p(\Omega)$ denotes the Sobolev-Slobodeckij space of order $s\in[0,\infty)$, $H^s_p(\Omega)$ denotes the Bessel potential space, where we have $W^k_p(\Omega)=H^k_p(\Omega)$ if $k\in\bbN_0 =\bbN\cup\{0\}$, and $W^s_p(\Omega)$ coincides with the Besov space $B^s_{pp}(\Omega)$ if $s\in\bbR_+\setminus\bbN$.  We refer to the monographs \cite{AdFo03,Tri83} for a detailed treatment of these spaces.  We mention that, we will use the Sobolev embeddings $W^1_p(J)\hookrightarrow BUC(J)$, $W^s_p(\Omega)\hookrightarrow BUC^k(\Omega)$ for $s-n/p > k$, $W^s_p(\Omega)\hookrightarrow W^t_q(\Omega)$ for $s-n/p \geq t-n/q$, where the relation $\hookrightarrow$ denotes continuous embedding.  Furthermore, we will use that there exists a bounded linear extension operator from $W^s_p(\Omega)$ to $W^s_p(\bbR^n)$.


\subsection{Maximal regularity}
By the work of Amann \cite{Ama93,Ama95}, Denk, Hieber \& Pr\"uss \cite{DHP03,DHP07} and many others, it is nowadays well-known that the nonhomogeneous initial-boundary value problem for the heat equation,
\begin{align*}
    u_t - \Delta u &= f \text{ in }J\times\Omega, \quad u|_\Gamma = g\text{ on }J\times\Gamma, \quad u|_{t=0}=u_0 \text{ in }\Omega,
\end{align*}
has \emph{maximal $L_p$-regularity} in the sense that there exists a unique solution
\begin{align}\label{eq:E_1}
  u \in \bbE_1(J) := W^1_p(J;L_p(\Omega))\cap L_p(J;W^2_p(\Omega)),
\end{align}
if and only if the given data $f,g,u_0$ satisfy the regularity conditions
\begin{align*}
  &f\in L_p(J\times\Omega), \quad g\in W^{1-1/2p}_p(J;L_p(\Gamma))\cap L_p(J;W^{2-1/p}_p(\Gamma)), \quad u_0\in W^{2-2/p}_p(\Omega),
\end{align*}
and the compatibility condition $u_0|_\Gamma = g|_{t=0}$ in the sense of traces.  By Banach's closed graph theorem, this implies that a solution satisfies the a priori estimate
\begin{align*}
  \norm u_{\bbE_1(J)} \lesssim \norm f_{L_p(J\times\Omega)} + \norm g_{\bbF_\Gamma(J)} + \norm{u_0}_{W^{2-2/p}_p(\Omega)}.
\end{align*}
%
%
Here the symbol $\lesssim$ indicates that the left-hand side $\norm u_{\bbE_1(J)}$ can be estimated by a constant $C\geq 0$ times the right-hand side, where $C$ does not depend on $u, f, g, u_0$.


\subsection{Trace theory}

To verify the necessity of the conditions on the data, we use Lemma 3.5 and Lemma 3.7 in \cite{DHP07}, which imply that the spatial and temporal traces
\begin{align}
  u \mapsto u|_{\Gamma} &: \bbE_1(J) \to \bbF_\Gamma(J) := W^{1-1/2p}_p(J;L_p(\Gamma))\cap L_p(J;W^{2-1/p}_p(\Gamma)),\label{eq:F_Gamma}\\
  u \mapsto u|_{t=0} &: \bbE_1(J) \to W^{2-2/p}_p(\Omega)\nonumber
\end{align}
are bounded and surjective for every $p\in(1,\infty)$.  The temporal trace theorem is also known in the following form \cite[Theorem III.4.10.2]{Ama95}:
\begin{align*}
  \bbE_1(J) \hookrightarrow BUC(J;W^{2-2/p}_p(\Omega)).
\end{align*}
For the compatibility conditions we use that the temporal trace
\begin{align*}
  g \mapsto g|_{t=0} &: \bbF_\Gamma(J) \to B^{2-3/p}_{pp}(\Gamma)
\end{align*}
is well-defined and bounded in the case $p>3/2$ \cite[Lemma 11]{diB84}.  Moreover, the spatial trace $W^s_p(\Omega)\to B^{s-1/p}_{pp}(\Gamma)$ is bounded for $s\in(1/p,\infty)$ \cite[Theorem 3.3.3]{Tri83}. Therefore we obtain $g(0)=u_0|_\Gamma$ in the sense of $B^{2-3/p}_{pp}(\Gamma)$ for $p>3/2$.  In the case $p< 3/2$ these traces do not exist (see e.g.\ \cite{DHP07}).  The case $p=3/2$ is excluded, since the trace space looks more complicated in this case \cite[Theorem 4.3.3]{Tri95}.

\subsection{Exponentially weighted spaces}

Let $Y$, $\bbX(J)$ be Banach spaces such that $\bbX(J)\hookrightarrow L_{1,\text{loc}}(J;Y)$ and let $\omega\in\bbR$. To describe exponential decay of solutions we employ the Banach space
\begin{align*}
  e^{-\omega}\bbX(J) := \{u\in L_{1,\text{loc}}(J;Y): (t\mapsto e^{\omega t}u(t)) \in \bbX(J)\},
\end{align*}
equipped with the norm $\|u\|_{e^{-\omega}\bbX(J)} := \|e^{\omega \cdot} u\|_{\bbX(J)}$, where we write $e^{\omega \cdot} u$ or $e^{\omega t} u$ instead of $(t\mapsto e^{\omega t}u(t))$ for the sake of brevity.

%
%

\subsection{Analytic mappings between Banach spaces}

Let $X$, $Y$ be Banach spaces over the same scalar field $\bbR$ or $\bbC$ and let $U\subset X$ be open.  We say that a mapping $F:U\subset X\to Y$ is \emph{analytic}\index{analytic operator} at $u\in U$, if there exists $r>0$ and bounded symmetric $k$-linear operators $F_{k} : X^k = X \times\cdots\times X \to Y$, $k\geq 0$, such that every $F(u+h)$ for $h\in X$, $\norm h<r$, can be represented as
\begin{align}\label{eq:analytic_power_series}
  F(u+h) = \sum_{k= 0}^\infty F_{k} h^k, \quad \text{such that} \quad \sum_{k= 0}^\infty \norm{F_{k}} \norm h^k < \infty,
\end{align}
where $\norm{F_k}$ denotes the norm of the $k$-linear operator $F_k$, i. e. the smallest number $M\geq 0$ such that $\norm {F_k(x_1,\ldots,x_k)}_Y\leq M \norm{x_1}_X\cdots\norm{x_k}_X$ for all $x_1,\ldots,x_k\in X$.  We refer to the monographs \cite{Dei85,Zei86I} for more information on analytic mappings.

\section{The linearized problem}

In this section we establish maximal regularity for the linearization of problem \eqref{prob:Kuznetsov} in the following sense.

\begin{lem}\label{lem:Max_Reg_for_Linearized_Problem}
Let $1<p<\infty$, $p\neq 3/2$, $n\in \mathbb{N}$, let $\lambda_0>0$ denote the smallest eigenvalue of the negative Dirichlet-Laplacian in $L_p(\Omega)$ and define $\omega_0 := \min\{b\lambda_0/2,c^2/b\}$.  For every $\omega\in (0,\omega_0)$ the following assertion holds.  There exists a unique solution
$$u\in e^{-\omega}\bbE_u, \quad \bbE_u := W_p^2(\mathbb{R}_+;L_p(\Omega))\cap W_p^1(\mathbb{R}_+;W_p^2(\Omega))$$
of the linear problem
\begin{align}\label{prob:Kuznetsov_Linearized}
    \left\{\begin{aligned}
        u_{tt}-c^2\Delta u-b\Delta u_t &= f &&\text{ in } \bbR_+\times\Omega,\\
        u|_{\Gamma} &= g &&\text{ on } \bbR_+\times\Gamma,\\
        u(0) &= u_0 &&\text{ in } \Omega, \\
        u_t(0) &= u_1 &&\text{ in } \Omega,
    \end{aligned}\right.
\end{align}
if and only if the data satisfy the following conditions:
\begin{enumerate}
\item $f\in e^{-\omega}L_p(\mathbb{R}_+\times\Omega)$;
\item $g\in e^{-\omega}\bbF_g$, $\bbF_g:=W_p^{2-1/2p}(\mathbb{R}_+;L_p(\Gamma))\cap W_p^1(\mathbb{R}_+;W_p^{2-1/p}(\Gamma))$;
\item $u_0\in W_p^2(\Omega)$, $u_1\in W_p^{2-2/p}(\Omega)$;
\item $g(0)=u_0|_\Gamma$ and in the case $p\in (3/2,\infty)$ also $g_t(0)=u_1|_{\Gamma}$.
\end{enumerate}
The solution satisfies the estimate
$$\|u\|_{e^{-\omega}\mathbb{E}_u}\lesssim \|f\|_{e^{-\omega} L_p(\mathbb{R}_+\times\Omega)}+\|g\|_{e^{-\omega}\mathbb{F}_g}+\|u_0\|_{W_p^2}
+\|u_1\|_{W_p^{2-2/p}}.$$
\end{lem}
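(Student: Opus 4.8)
The plan is to reduce the second-order-in-time equation \eqref{prob:Kuznetsov_Linearized} to a first-order parabolic system to which the known maximal $L_p$-regularity theory for the Dirichlet-Laplacian applies, and then to account for the exponential weight. First I would remove the boundary inhomogeneity: by the surjectivity of the spatial trace $\bbE_1(\bbR_+)\to\bbF_\Gamma(\bbR_+)$ in \eqref{eq:F_Gamma}, together with an analogous statement for the higher-order space $\bbE_u$ (whose spatial trace is exactly $\bbF_g$), one picks an extension $G\in e^{-\omega}\bbE_u$ of $g$ and subtracts it, so that it suffices to treat the problem with homogeneous Dirichlet data $u|_\Gamma=0$, modified right-hand side $f-G_{tt}+c^2\Delta G+b\Delta G_t\in e^{-\omega}L_p$, and modified initial data still in $W^2_p(\Omega)\times W^{2-2/p}_p(\Omega)$ (now with zero trace, so the compatibility conditions are built in). Second I would conjugate with the weight: writing $u=e^{-\omega t}w$ transforms the problem into a constant-coefficient problem of the same type for $w\in\bbE_u$ but with the operators $\Delta$ shifted; precisely, $u_{tt}-c^2\Delta u-b\Delta u_t=f$ becomes $w_{tt}-2\omega w_t+\omega^2 w - c^2\Delta w - b\Delta(w_t-\omega w)=e^{\omega t}f$, i.e. an equation of the form $w_{tt}+(\text{first order in }t)-b\Delta w_t-(c^2-b\omega)\Delta w + \omega^2 w = \tilde f$ with $\tilde f\in L_p(\bbR_+\times\Omega)$. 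So everything comes down to an unweighted maximal-regularity statement on $\bbR_+$ for such an equation with homogeneous boundary data, and one must check that the shifted elliptic part still generates the right semigroup — this is where the hypothesis $\omega<\omega_0=\min\{b\lambda_0/2,c^2/b\}$ enters, ensuring in particular $c^2-b\omega>0$.

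Third, for the unweighted homogeneous problem I would write $A:=-\Delta_D$ the Dirichlet-Laplacian on $L_p(\Omega)$, which has bounded imaginary powers / is $\calR$-sectorial with spectrum in $[\lambda_0,\infty)$, and factor the symbol: the second-order operator $\del_t^2 + b A\del_t + c^2 A$ formally factors as $(\del_t + B_+)(\del_t + B_-)$ with $B_\pm = \tfrac b2 A \mp \sqrt{\tfrac{b^2}{4}A^2 - c^2 A}$. The key point is that $B_-$ (the "slow" root, behaving like $(c^2/b)\,$times the identity for large $A$... — more precisely $B_-\sim (c^2/b)I$ in the sense that $B_- A^{-1}$ is bounded and $B_-$ is a bounded perturbation of a positive operator) and $B_+\sim bA$ are both positive sectorial operators admitting an $\calH^\infty$-calculus, so that the first-order Cauchy problems $\dot y + B_\pm y = \cdot$ have maximal $L_p$-regularity on $\bbR_+$ by Dore–Venni / the Weis theorem. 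Solving $(\del_t+B_+)z=f$, $z(0)=z_0$ and then $(\del_t+B_-)u=z$, $u(0)=u_0$ and tracking the spaces gives $u_t\in W^1_p(\bbR_+;L_p)\cap L_p(\bbR_+;W^2_p)$ and $u\in L_p(\bbR_+;W^2_p)$ together with $u_{tt}\in L_p(\bbR_+;L_p)$, which is exactly $u\in\bbE_u$; the initial data $u_0\in W^2_p$, $u_1\in W^{2-2/p}_p$ are precisely the trace spaces that make this decomposition work (one needs $u_0\in\dom(B_-)$-type regularity, i.e. $W^2_p$, and $z(0)=u_1+B_-u_0\in W^{2-2/p}_p$, the trace space of $\bbE_1$, which is consistent because $B_-u_0\in W^2_p\subset W^{2-2/p}_p$). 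Necessity of all four conditions is read off directly from the trace theorems recalled in Section 2.2 applied to the components $u$, $u_t$ of a solution in $\bbE_u$, and the a priori estimate follows from the closed graph theorem or by tracking constants through the construction.

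The main obstacle I expect is the spectral/operator-theoretic analysis of the two roots $B_\pm$ on the whole half-line with the correct shift: one must verify that for $\omega\in(0,\omega_0)$ the shifted symbol $\del_t^2 - 2\omega\del_t + \omega^2 + bA(\del_t-\omega) + c^2 A$ (equivalently, after the substitution, the operator pencil $z\mapsto z^2 + (bA+\text{const})z + ((c^2-b\omega)A+\omega^2)$) still factors into two operators that are invertible and sectorial of angle $<\pi/2$ on $L_p(\Omega)$, with uniformly bounded $\calH^\infty$-calculus, so that exponential decay is genuinely gained rather than just formally. The two thresholds in $\omega_0$ correspond exactly to the two roots: $\omega<c^2/b$ keeps the "slow" root $B_-$ positive (its leading behavior is $(c^2-b\omega)/b>0$ times the identity at infinity, but one also needs positivity for small eigenvalues, giving the condition at $\lambda=\lambda_0$), while $\omega< b\lambda_0/2$ keeps the discriminant and the "fast" root well-behaved near the bottom of the spectrum; checking these carefully — including that no spectral parabola crosses the imaginary axis and that the square-root operator $\sqrt{b^2A^2/4 - c^2A}$ is well-defined and admits bounded imaginary powers — is the technical heart of the argument. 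Everything else (extension of $g$, the weight conjugation, reading off necessity and the estimate) is routine once this is in place.
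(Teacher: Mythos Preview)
Your plan is sound in outline, but the balance of difficulty is almost exactly inverted relative to the paper's proof. The paper treats the homogeneous--boundary problem (your Step~3) as known: it simply quotes \cite[Theorem~2.5]{MeyWi11} for existence, uniqueness, and the weighted estimate when $g=0$. The factoring $(\partial_t+B_+)(\partial_t+B_-)$ you sketch, together with the spectral analysis of the roots $\beta_\pm(\lambda)$ leading to $\inf_\lambda \Re\beta_\pm(\lambda)=\omega_0$, is essentially the content of that reference; so your Step~3 is correct but is re-deriving a cited black box rather than contributing something new to this lemma.

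Conversely, the step you label ``routine'' --- picking an extension $G\in e^{-\omega}\bbE_u$ with $G|_\Gamma=g$ --- is precisely where the paper does the real work, and it is less innocent than you suggest. A purely spatial right inverse of the trace (e.g.\ harmonic extension in $x$) sends $g\in W^{2-1/2p}_p(\bbR_+;L_p(\Gamma))$ only into $W^{2-1/2p}_p(\bbR_+;L_p(\Omega))$, which is strictly weaker than the $W^2_p(\bbR_+;L_p(\Omega))$ required for $\bbE_u$; you must gain $1/(2p)$ of time-regularity, and that forces some parabolic smoothing (mixing $t$ and $x$). The paper handles this with a separate lemma: it solves the heat problem $v_t-b\Delta v=0$, $v|_\Gamma=g_t$, $v(0)=0$ in $e^{-\omega}\bbE_1$, and then sets $w(t)=-\int_t^\infty v(s)\,ds$, using $\omega>0$ and Young's inequality to land in $e^{-\omega}\bbE_u$; this $w$ satisfies $w_{tt}-b\Delta w_t=0$, $w|_\Gamma=g$, and trivial initial data. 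After subtracting $w$ one is back to a problem with $g=0$ and one more appeal to \cite{MeyWi11}. (The initial data and $f$ are removed beforehand by extending to $\bbR^n$, cutting off, and solving on a large ball, again via \cite{MeyWi11}.)

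So: your factoring route for the $g=0$ core is fine and your identification of the two thresholds in $\omega_0$ with the two roots is exactly right; but be aware that the surjectivity of $e^{-\omega}\bbE_u\to e^{-\omega}\bbF_g$ that you invoke in Step~1 is not a free consequence of the unweighted $\bbE_1\to\bbF_\Gamma$ statement in Section~2.2, and producing a right inverse with the correct time-regularity and exponential weight is the substantive part of the paper's argument.
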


In order to prove this result, we will use the subsequent lemma to remove the inhomogeneity $g$.  The symbols $\bbE_1 := \bbE_1(\bbR_+)$, $\bbF_\Gamma := \bbF_\Gamma(\bbR_+)$ are defined in \eqref{eq:E_1} and \eqref{eq:F_Gamma}, respectively.

\begin{lem}\label{lem:Inhom_bdy_data}
  Let $1<p<\infty$, $p\neq 3/2$, $n\in \mathbb{N}$ and let $\lambda_0>0$ denote the smallest eigenvalue of the negative Dirichlet-Laplacian in $L_p(\Omega)$.  Then for every $\omega\in(0,b\lambda_0)$, there exists a unique solution $w\in e^{-\omega}\bbE_u$ of
  \begin{align}\label{prob:Inhom_bdy_data}
     \left\{\begin{aligned}
        w_{tt} - b\Delta w_t &= 0 &&\text{ in } \bbR_+\times\Omega,\\
        w|_\Gamma &= g &&\text{ on } \bbR_+\times\Gamma,\\
        w|_{t=0} &= 0 &&\text{ in } \Omega, \\
        \del_t w|_{t=0} &= 0 &&\text{ in } \Omega,
     \end{aligned}\right.
  \end{align}
  if and only if $g\in e^{-\omega} \bbF_g$, $g(0)=0$, and in the case $p>3/2$ also $g_t(0)=0$.  The solution $w$ satisfies the estimate $\norm{e^{\omega} w}_{\bbE_u} \lesssim \norm{e^\omega g}_{\bbF_g}$.
\end{lem}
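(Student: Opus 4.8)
The plan is to reduce the second-order problem \eqref{prob:Inhom_bdy_data} to a first-order parabolic equation for the time derivative $z := w_t$ and then apply the classical maximal $L_p$-regularity theory for the heat equation with inhomogeneous Dirichlet data, combined with the exponential-weight bookkeeping of Section 2.3. First I would substitute $\tilde w := e^{\omega t} w$, $\tilde g := e^{\omega t} g$; a direct computation turns $w_{tt} - b\Delta w_t = 0$ into
\begin{align*}
  \tilde w_{tt} - 2\omega \tilde w_t + \omega^2 \tilde w - b\Delta \tilde w_t + b\omega \Delta \tilde w &= 0,
\end{align*}
with homogeneous initial conditions $\tilde w(0)=0$, $\tilde w_t(0)=0$ and boundary data $\tilde w|_\Gamma = \tilde g \in \bbF_g$. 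Since $\tilde w(0)=0$ and $\tilde w \in \bbE_u = W^2_p(\bbR_+;L_p(\Omega))\cap W^1_p(\bbR_+;W^2_p(\Omega))$, we can recover $\tilde w$ from $z := \tilde w_t$ by $\tilde w(t) = \int_0^t z(s)\,ds$, and $z \mapsto \tilde w$ is bounded from $\bbE_1 = W^1_p(\bbR_+;L_p(\Omega))\cap L_p(\bbR_+;W^2_p(\Omega))$ to $\bbE_u$ provided $z$ itself decays (this is where the constraint $\omega < b\lambda_0$ will really bite: the primitive operator is only bounded on the weighted space because the resolvent of $b\Delta_D$ is invertible at the relevant exponential rate).

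The core step is then to solve, for the new unknown $z \in \bbE_1$ (after possibly incorporating the extra weight, i.e.\ really working with $e^{-\omega}$-weighted spaces throughout), the problem
\begin{align*}
  z_t - b\Delta z &= F(z) \text{ in } \bbR_+\times\Omega, \quad z|_\Gamma = \del_t(\text{lift of }g), \quad z|_{t=0}=0,
\end{align*}
where $F(z)$ collects the lower-order terms $2\omega z - \omega^2 \int_0^t z - b\omega \Delta \int_0^t z$ coming from the substitution (in the unweighted formulation $F=0$, but keeping track of the weight forces these terms). I would handle this by a Neumann-series / perturbation argument: on a weighted space $e^{-\omega}\bbE_1$ with $\omega$ small, the solution operator $\mathcal{S}$ for $z_t - b\Delta z = f$, $z|_\Gamma = h$, $z(0)=0$ — which exists and is bounded by Section 2.1, since the Dirichlet Laplacian generates an analytic semigroup with spectral bound $-b\lambda_0 < -\omega$, so maximal regularity persists on the weighted space — composed with the lower-order map $z \mapsto F(z)$ has small operator norm as $\omega \to 0$, hence $I - \mathcal{S}\circ F$ is invertible. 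The boundary data requires care: one first chooses an extension $G \in e^{-\omega}\bbE_u$ of $g$ with $G(0)=0$ (possible by the spatial trace surjectivity \eqref{eq:F_Gamma} applied in the weighted setting, using $g(0)=0$ and, if $p>3/2$, $g_t(0)=0$ to match temporal compatibility of $z|_\Gamma = G_t|_\Gamma$ at $t=0$), subtract it off, and solve for the corrected unknown with homogeneous boundary data — here the hypothesis $p\neq 3/2$ and the two-regime compatibility conditions enter exactly as in Lemma 2 of \cite{DHP07} / Section 2.2.

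For uniqueness, if $w_1, w_2$ are two solutions in $e^{-\omega}\bbE_u$, their difference solves \eqref{prob:Inhom_bdy_data} with $g=0$; setting $z = \del_t(w_1-w_2) \in e^{-\omega}\bbE_1$ one gets $z_t - b\Delta z = 0$, $z|_\Gamma = 0$, $z(0) = 0$, and with the extra weight terms absorbed, uniqueness for the weighted heat equation forces $z=0$, hence $w_1=w_2$ since both vanish at $t=0$. The final estimate $\norm{e^\omega w}_{\bbE_u}\lesssim \norm{e^\omega g}_{\bbF_g}$ follows from the closed-graph theorem (or by tracking constants through the Neumann series), exactly as the a priori estimate is obtained in Section 2.1. \textbf{The main obstacle} I anticipate is verifying that maximal $L_p$-regularity for $z_t - b\Delta z$ genuinely transfers to the exponentially weighted space $e^{-\omega}\bbE_1$ for all $\omega \in (0,b\lambda_0)$, and simultaneously that the primitive operator $z\mapsto\int_0^t z$ maps $e^{-\omega}\bbE_1$ boundedly into $e^{-\omega}\bbE_u$ — both hinge on the precise location $-b\lambda_0$ of the spectral bound of the Dirichlet Laplacian and on the fact that $\omega$ stays strictly below it; the endpoint $\omega = b\lambda_0$ is genuinely lost. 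A secondary technical point is the construction of the boundary extension $G$ respecting the (regime-dependent) compatibility conditions at $t=0$, which is where $p \neq 3/2$ is essential.
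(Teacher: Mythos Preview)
Your core reduction---differentiate in time, solve a heat equation for $z:=w_t$ with boundary data $g_t$, then integrate back---is exactly what the paper does. The execution in the paper is, however, considerably cleaner and sidesteps both complications you introduce.

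First, the substitution $\tilde w=e^{\omega t}w$ is unnecessary. The paper works directly in the weighted space and invokes weighted maximal regularity (citing \cite{LatPrSchn06}) to obtain a unique $v\in e^{-\omega}\bbE_1$ solving $v_t-b\Delta v=0$, $v|_\Gamma=g_t$, $v(0)=0$, for every $\omega\in(0,b\lambda_0)$. There are no lower-order terms and hence no Neumann series. Your perturbation route, as written, only yields $\norm{\mathcal S\circ F}<1$ for $\omega$ sufficiently small, which does not cover the stated range $(0,b\lambda_0)$; if you commit to that path you have a genuine gap. (Incidentally, you have the two formulations reversed: it is the substituted equation for $\tilde w$ that carries the extra terms, while the direct weighted formulation has $F=0$.)

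Second---and this is precisely the ``main obstacle'' you anticipate---the forward primitive $z\mapsto\int_0^t z$ is \emph{not} bounded from $e^{-\omega}\bbE_1$ to $e^{-\omega}\bbE_u$, since the kernel $e^{\omega(t-s)}\chi_{\{0<s<t\}}$ is not in $L_1$. The paper's device is to integrate the other way,
\[
  w(t):=-\int_t^\infty v(s)\,ds, \qquad e^{\omega t}w(t)=-\bigl(e^{-\omega\cdot}\chi_{\bbR_+}\bigr)\ast\bigl(e^{\omega\cdot}v\bigr)(t),
\]
so that Young's inequality (with $\norm{e^{-\omega\cdot}\chi_{\bbR_+}}_{L_1}=\omega^{-1}$) immediately gives $w\in e^{-\omega}\bbE_u$. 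The initial condition $w(0)=0$ is then recovered a posteriori: from the identity $w_t=b\Delta w$ one gets $\Delta w(0)=0$, and $w(0)|_\Gamma=g(0)=0$ forces $w(0)=0$. This single trick replaces your boundary lift $G$, your Neumann series, and resolves the obstacle you flagged; uniqueness and the closed-graph estimate then go exactly as you outline.
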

\begin{proof}
  We first prove sufficiency.  Using maximal regularity \cite[Proposition 8 and formula (52)]{LatPrSchn06}, we obtain a unique solution $v\in e^{-\omega}\bbE_1$ of the parabolic problem
  \begin{align*}
    v_{t} - b\Delta v = 0 \text{ in } \bbR_+\times\Omega, \quad v|_\Gamma = \del_t g \text{ on } \bbR_+\times\Gamma, \quad v|_{t=0}=0 \text{ in } \Omega.
  \end{align*}
  Indeed, the operator $\del_t : \bbF_g\to\bbF_\Gamma$ is bounded and thus $\norm{e^{\omega t}\del_t g}_{\bbF_\Gamma} = \norm{\del_t(e^{\omega  t}g)-\omega e^{\omega t} g}_{\bbF_\Gamma} \lesssim \norm{e^{\omega t}g}_{\bbF_g}$.  Next we define
  \begin{align*}
    w(t,x) := - \int_t^\infty v(s,x)ds, \quad t\in \bbR_+, \; x\in\Omega.
  \end{align*}
  Using $\omega>0$, we infer from $w_t=v$, from the identity
  $$e^{\omega t}w(t):=-\int_t^\infty e^{\omega (t-s)}e^{\omega s}v(s)\ ds = (e^{-\omega t}\chi_{\bbR_+})\ast(e^{\omega t} v)(t)$$
  and Young's inequality, that $w\in e^{-\omega}\bbE_u$.  Moreover it holds that
  $$w_t(t)=v(t)=-\int_t^\infty v_s(s)\ ds=-b\int_t^\infty \Delta v(s)\ ds=b\Delta w(t),$$
  $$w(t)|_\Gamma=-\int_t^\infty g_s(s)\ ds=g(t)$$
  and $w_t(0)=v(0)=0$. It follows that $b\Delta w(0)=w_t(0)=0$ and $w(0)|_{\Gamma}=g(0)=0$, hence $w(0)=0$ in $\Omega$.  Therefore $w$ is a solution to \eqref{prob:Inhom_bdy_data}.

  The necessity follows from the spacial trace theorem applied to $w, w_t \in e^{-\omega}\bbE_1$.  To obtain uniqueness, let $w$ be a solution to \eqref{prob:Inhom_bdy_data} with $g=0$.  Since $w_t$ solves a heat problem with homogeneous data, we obtain $w_t=0$ and therefore also $w=0$ by the initial condition $w(0)=0$.  The estimate follows from the closed graph theorem.
\end{proof}

\begin{proof}[Proof of \prettyref{lem:Max_Reg_for_Linearized_Problem}.]
We obtain uniqueness from our previous maximal regularity result \cite[Theorem 2.5]{MeyWi11} for \eqref{prob:Kuznetsov_Linearized} for the case $g=0$. To verify the necessity of the conditions on the data, suppose that $u$ is a solution to \eqref{prob:Kuznetsov_Linearized}.  Then $e^{\omega t}u$ and $(e^{\omega t}u)_t = \omega e^{\omega t} u + e^{\omega t}u_t$ belong to $\bbE_1$ and (i) is readily checked.  Taking the spatial trace yields $e^{\omega t}g$, $(e^{\omega t}g)_t \in \bbF_\Gamma$ which implies (ii).  The exponential weight $e^{\omega t}$ does not affect the initial regularity and therefore (iii) follows by taking the temporal trace and using the embedding $W^1_p(J;W^2_p(\Omega))\hookrightarrow BUC(J;W^2_p(\Omega))$.  Using that \begin{align*}
  (g,g_t)\in BUC(J;W^{2-1/p}_p(\Gamma)\times B_{pp}^{2-3/p}(\Gamma))
\end{align*}
and applying the spatial trace to $u(0)$, $u_t(0)$ and the temporal trace to $(g,g_t)$, we see that $g(0)=u_0|_\Gamma$ is valid in the sense of $W^{2-1/p}_p(\Gamma)$ for all $p$ and $\del_t g(0) = u_1|_\Gamma$ is valid in the sense of $B^{2-3/p}_{pp}(\Gamma)$ if $p>3/2$.

It remains to prove sufficiency of the conditions.  First we reduce the problem to the case $u_0=0$, $u_1=0$, $f=0$.  This cannot be done by just solving the problem with $g=0$, due to the compatibility conditions.  Therefore we extend $u_0$, $u_1$ and $f$ to some functions $\tilde{u}_0\in W_p^2(\mathbb{R}^n)$, $\tilde{u}_1\in W_p^{2-2/p}(\mathbb{R}^n)$ and $\tilde{f}\in e^{-\omega}L_p(\mathbb{R}_+\times\mathbb{R}^n)$.  By means of a cut-off function $\phi\in C_c^\infty(\mathbb{R}^n)$ such that $\phi(x)=1$ for $x\in\Omega$ and $\phi(x)=0$ for $x\notin B_R :=\{y\in\bbR^n : \abs y < R\}$ for some $R>0$, we define new data $\hat{u}_0:=\tilde{u}_0\phi$, $\hat{u}_1:=\tilde{u}_1\phi$ and $\hat{f}:=\tilde{f}\phi$ and consider the problem
\begin{align*}
    \left\{\begin{aligned}
        \hat{u}_{tt}-\Delta \hat{u}-\Delta \hat{u}_t&=\hat{f}, &&\quad t>0,\ x\in B_R,\\
        \hat{u}|_{\partial B_R}&=0, &&\quad t>0,\ x\in \partial B_R,\\
        \hat{u}(0)=\hat{u}_0,\ \hat{u}_t(0)&=\tilde{u}_1, &&\quad t=0,\ x\in B_R.
    \end{aligned}\right.
\end{align*}
Using maximal regularity \cite[Theorem 2.5]{MeyWi11}, we obtain a unique solution
\begin{align*}
\hat{u}\in e^{-\omega}[W_p^2(\mathbb{R}_+;L_p(B_R))\cap W_p^1(\mathbb{R}_+;W_p^2(B_R))].
\end{align*}
Let $\bar{u}$ denote the restriction of $\hat{u}$ to $\Omega$ and let $\bar{g}:=g-\bar{u}|_{\Gamma}$.  Then the final solution $u$ will be given by $u=v+\bar u$, where $v$ solves the problem
\begin{align*}
  \left\{\begin{aligned}
    v_{tt}-\Delta v-\Delta v_t&=0, &&\quad t>0,\ x\in \Omega,\\
    v|_{\Gamma}&=\bar{g}, &&\quad t>0,\ x\in \Gamma,\\
    v(0)=0,\ v_t(0)&=0, &&\quad t=0,\ x\in\Omega.
  \end{aligned}\right.
\end{align*}
From \prettyref{lem:Inhom_bdy_data} we obtain a unique solution $\bar v \in e^{-\omega}\bbE_u$ of the problem
\begin{align*}
  \left\{\begin{aligned}
    \bar v_{tt} - b\Delta \bar v_t &= 0 &&\quad\text{ in } \bbR_+\times\Omega,\\
    \bar v|_\Gamma &= \bar{g} &&\quad\text{ on } \bbR_+\times\Gamma,\\
    \bar v|_{t=0} &= 0 &&\quad\text{ in } \Omega, \\
    \del_t \bar v|_{t=0} &= 0 &&\quad\text{ in } \Omega.
  \end{aligned}\right.
\end{align*}
Then the function $w:=v-\bar{v}$ solves the problem
\begin{align*}
  \left\{\begin{aligned}
    w_{tt}-\Delta w-\Delta w_t &= \Delta \bar{v} &&\quad \text{ in } \bbR_+\times\Omega,\\
    w|_{\Gamma} &= 0 &&\quad \text{ on } \bbR_+\times\Gamma,\\
    w(0)=0, \ w_t(0) &= 0 &&\quad \text{ in } \Omega,
  \end{aligned}\right.
\end{align*}
which has a unique solution $w\in e^{-\omega}\bbE_u$
by \cite[Theorem 2.5]{MeyWi11}. The function $u:= w+\bar{v}+\bar{u}$ is the desired solution of \eqref{prob:Kuznetsov_Linearized} and the estimate follows from the closed graph theorem. This concludes the proof of \prettyref{lem:Max_Reg_for_Linearized_Problem}.
\end{proof}

\section{The nonlinear problem}

In this section we construct a solution to the nonlinear problem \eqref{prob:Kuznetsov} of the form $(u+u_*,v)$.  Here $u_*$ solves the linearized problem \eqref{prob:Kuznetsov_Linearized} for the data $(f=0,g,u_0,u_1)$ and $u$ satisfies homogeneous boundary and initial conditions.  The (small) deviation $(u,v)$ from $(u_*,0)$ will be found by the implicit function theorem.

For $p>\max\{1,n/2\}$, we employ the Banach function spaces
\begin{align}\label{eq:RegSol_2}
\begin{aligned}
  \bbE_{u} &:= W_p^2(\mathbb{R}_+;L_p(\Omega)) \cap  W_p^1(\mathbb{R}_+;W^2_p(\Omega)), \\
  \0 \bbE_{u,\Gamma} &:= \{u\in\bbE_u : u(0)=u_t(0)=0, \, u|_\Gamma = 0\},\\
  \bbE_v &:= \{v\in BUC^1(\mathbb{R}_+;W_p^1(\Omega)^n):(t\mapsto e^{\omega t}v_t)\in BUC(\mathbb{R}_+;W_p^1(\Omega)^n)\}, \\
  \bbE_{v_t} &:= BUC(\mathbb{R}_+;W_p^1(\Omega)^n),
\end{aligned}
\end{align}
Observe that now $\bbE_v$ is a somewhat larger space compared to \eqref{eq:RegSol}.

\begin{lem}\label{lem:Nonlinearity_is_analytic}
  Let $\Omega\subset\bbR^n$ be a bounded domain with boundary $\Gamma=\del\Omega\in C^2$ and let
  $p>\max\{1,n/2\}$, $p\neq 3/2$.  Moreover, let $\omega\in(0,\omega_0)$ have the same meaning as in \prettyref{lem:Max_Reg_for_Linearized_Problem}.  Then the mapping
  $$H:e^{-\omega}\0\bbE_{u,\Gamma}\times \bbE_v\times e^{-\omega}\bbE_u\times W_p^1(\Omega)^n\to e^{-\omega} L_p(\mathbb{R}_+\times\Omega)\times e^{-\omega} \bbE_{v_t} \times W_p^1(\Omega)^n,$$
  defined by
  \begin{align*}
    &H(u,v,u_*,v_0)\\&:=\begin{pmatrix}u_{tt} - c^2\Delta u-b\Delta u_t-k((u+u_*)^2)_{tt}-2\rho_0^{-1}(\nabla (u+u_*))^2-2v\cdot \nabla(u+u_*)_t\\
  v_t+\rho_0^{-1}\nabla(u+u_*)\\
  v(0)-v_0\end{pmatrix}
  \end{align*}
  is analytic and its Fr\'{e}chet derivative w. r. t. $(u,v)$ at $(0,0,0,0)$ is invertible. 
\end{lem}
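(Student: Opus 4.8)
The plan is to exploit that $H$ is \emph{affine--quadratic} in its arguments: apart from a constant and bounded linear terms it is a finite sum of bounded bilinear maps. With the definition of analyticity from Section~2.4, the power series of such an $H$ around any point has at most three nonzero terms, so analyticity will be automatic once I check boundedness of each term; and the Fr\'echet derivative at the origin will annihilate all the genuinely bilinear terms, leaving a triangular linear system that I invert using \prettyref{lem:Max_Reg_for_Linearized_Problem}.

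First I would check that $H$ is well defined and bounded on each piece. Writing $\phi:=u+u_*\in e^{-\omega}\bbE_u$, the linear terms $u_{tt}-c^2\Delta u-b\Delta u_t$, $v_t+\rho_0^{-1}\nabla\phi$, $v(0)-v_0$ map boundedly into $e^{-\omega}L_p(\bbR_+\times\Omega)$, $e^{-\omega}\bbE_{v_t}$, $W^1_p(\Omega)^n$ straight from the definitions of $\bbE_u$, $\bbE_v$, $\bbE_{v_t}$ (factoring the weight through the time derivatives). So the work lies in the genuinely bilinear terms $\phi\,\phi_{tt}$, $(\phi_t)^2$, $(\nabla\phi)^2$ and $v\cdot\nabla\phi_t$, which I want to be bounded bilinear maps into $e^{-\omega}L_p(\bbR_+\times\Omega)$. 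Here I would use $\bbE_u\hookrightarrow BUC(\bbR_+;W^2_p(\Omega))\hookrightarrow BUC(\bbR_+\times\Omega)$, $W^1_p(\Omega)\hookrightarrow L_{2p}(\Omega)$, and the anisotropic embedding $\bbE_1\hookrightarrow L_{2p}(\bbR_+\times\Omega)$ (or $\bbE_1\hookrightarrow BUC(\bbR_+;C(\bar\Omega))$ in the high-regularity range), all valid precisely because $p>\max\{1,n/2\}$. The key observation is that in each product at least one factor carries the exponential weight $e^{-\omega t}$ (note that $v$ itself lies only in $BUC(\bbR_+;W^1_p(\Omega)^n)$, with decay only on $v_t$), and since $e^{-\omega t}\in L_q(\bbR_+)$ for every $q<\infty$, any missing time-integrability of the other, merely $BUC$- or $L_p$-in-time factor is recovered; e.g.\ $\nabla\phi\in e^{-\omega}BUC(\bbR_+;L_{2p}(\Omega))$ yields $(\nabla\phi)^2\in e^{-\omega}L_p(\bbR_+\times\Omega)$, and $v\cdot\nabla\phi_t$ would be estimated by H\"older's inequality in space ($v(t)\in W^1_p\hookrightarrow L_n$ and $\nabla\phi_t(t)\in W^1_p\hookrightarrow L_{np/(n-p)}$ when $p<n$, the case $p\ge n$ being trivial) followed by $L_p$-integration in time of $e^{\omega t}\phi_t\in L_p(\bbR_+;W^2_p(\Omega))$. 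This establishes that $H$ is well defined and analytic.

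Next I would compute the derivative. At $(u,v,u_*,v_0)=(0,0,0,0)$ one has $\phi=0$, so the differentials of $\phi\,\phi_{tt}$, $(\phi_t)^2$, $(\nabla\phi)^2$ and $v\cdot\nabla\phi_t$ all vanish, leaving
\[
  D_{(u,v)}H(0,0,0,0)[h,\eta] = \bigl(h_{tt}-c^2\Delta h-b\Delta h_t,\ \eta_t+\rho_0^{-1}\nabla h,\ \eta(0)\bigr).
\]
To invert it, given $(f,\chi,w_0)$ in the target space I would first solve $h_{tt}-c^2\Delta h-b\Delta h_t=f$, $h|_\Gamma=0$, $h(0)=h_t(0)=0$ by \prettyref{lem:Max_Reg_for_Linearized_Problem} (all compatibility conditions hold trivially), obtaining a unique $h\in e^{-\omega}\0\bbE_{u,\Gamma}$ with $\|h\|_{e^{-\omega}\bbE_u}\lesssim\|f\|_{e^{-\omega}L_p}$; then I would set $\eta(t):=w_0+\int_0^t\bigl(\chi(s)-\rho_0^{-1}\nabla h(s)\bigr)\,ds$. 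One checks $e^{\omega t}\eta_t=e^{\omega t}\chi-\rho_0^{-1}\nabla(e^{\omega t}h)\in BUC(\bbR_+;W^1_p(\Omega)^n)=\bbE_{v_t}$ (using $e^{\omega t}h\in\bbE_u\hookrightarrow BUC(\bbR_+;W^2_p(\Omega))$) and $\eta_t\in e^{-\omega}BUC(\bbR_+;W^1_p)\subset L_1(\bbR_+;W^1_p)$, hence $\eta\in\bbE_v$, $\eta(0)=w_0$, with $\|\eta\|_{\bbE_v}\lesssim\|\chi\|_{e^{-\omega}\bbE_{v_t}}+\|w_0\|_{W^1_p}+\|f\|_{e^{-\omega}L_p}$; uniqueness of $(h,\eta)$ is immediate (uniqueness in \prettyref{lem:Max_Reg_for_Linearized_Problem}, then $\eta$ is determined by $\eta_t$ and $\eta(0)$). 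Thus $D_{(u,v)}H(0,0,0,0)$ is a bounded bijection onto the full codomain, hence an isomorphism.

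I expect the main obstacle to be the second step, the nonlinear product estimates, rather than the inversion of the derivative (which is essentially a corollary of \prettyref{lem:Max_Reg_for_Linearized_Problem} plus one primitive in time). The delicate points there are pinning down exactly which Sobolev embeddings survive on the threshold $p>\max\{1,n/2\}$ (especially $W^1_p(\Omega)\hookrightarrow L_{2p}(\Omega)$ and the anisotropic $\bbE_1\hookrightarrow L_{2p}(\bbR_+\times\Omega)$) and systematically using the exponential weights to compensate for the absence of time-decay in the $BUC$-type factors and in $v$.
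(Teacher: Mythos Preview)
Your proposal is correct and follows essentially the same route as the paper: verify that each component of $H$ is a bounded linear or bilinear map (hence $H$ is polynomial and analytic), then compute $D_{(u,v)}H(0,0,0,0)$, solve the first component via maximal regularity for the linearized problem, and recover $\eta$ by a primitive in time. The only cosmetic differences are in the particular Sobolev exponents you pick for the H\"older pairings (e.g.\ $L_n\times L_{np/(n-p)}$ for $v\cdot\nabla\phi_t$, whereas the paper uses $L_{2p}\times L_{2p}$) and that the paper spells out the mixed--derivative chain behind your ``anisotropic embedding $\bbE_1\hookrightarrow L_{2p}(\bbR_+\times\Omega)$''; both choices are valid under $p>\max\{1,n/2\}$.
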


\begin{proof}
  We define the space $\0 W^1_p(J;X) := \{u\in W^1_p(J;X) : u(0)=0 \}$.  It is easy to check that the linear operators
  \begin{align*}
    u\mapsto u_{tt} &: e^{-\omega}\0\bbE_{u,\Gamma}\to e^{-\omega}L_p(\bbR_+\times\Omega), \\
    u\mapsto -\Delta u &: e^{-\omega}\0\bbE_{u,\Gamma}\to e^{-\omega}\0 W^1_p(\bbR_+;L_p(\Omega)), \\
    u\mapsto -\Delta u_t &: e^{-\omega}\0\bbE_{u,\Gamma}\to e^{-\omega}L_p(\bbR_+\times\Omega), \\
    v\mapsto v_t &: \bbE_v \to e^{-\omega}\bbE_{v_t}, \\
    v\mapsto v_0 &: \bbE_v \to W^1_p(\Omega)^n
  \end{align*}
  are bounded and analytic.  Next, we check that
  \begin{align}\label{eq:nonlinearity_1}
    (u,u_*)\mapsto ((u+u_*)^2)_{tt} &: e^{-\omega}\0\bbE_{u,\Gamma}\times e^{-\omega}\bbE_{u}\to e^{-\omega}L_p(\bbR_+\times\Omega)
  \end{align}
  is analytic.  From the preliminaries, we obtain the continuity of the embeddings
  \begin{align}
    \bbE_u &\hookrightarrow BUC(\bbR_+;W^2_p(\Omega)) \nonumber\\
    &\hookrightarrow BUC(J\times \Omega) && \text{(valid for $p>n/2$)},\nonumber\\
    \bbE_u &\hookrightarrow H^{1+\theta-\eps/2}_p(\bbR_+;H^{2-2\theta+\eps}_p(\Omega)) && \text{(valid for $\theta-\eps/2\in[0,1]$, $\eps>0$)}\label{eq:E_u_to_mixed_derivative_space}\\
    &\hookrightarrow W^{1+\theta-\eps}_p(\bbR_+;W^{2-2\theta}_p(\Omega)) && \text{(valid for $\eps>0$)}\nonumber\\
    &\hookrightarrow W^{1}_{2p}(\bbR_+;W^{2-2\theta}_p(\Omega)) &&\text{(valid for $\theta\geq 1/2p+\eps$)}\nonumber\\
    &\hookrightarrow W^{1}_{2p}(\bbR_+;L_{2p}(\Omega)) &&\text{(valid for $\theta\leq 1-n/4p$)}\nonumber.
  \end{align}
  Here, \eqref{eq:E_u_to_mixed_derivative_space} is a consequence of Sobolevskij's mixed derivative theorem \cite[Proposition 3.2]{DHP07}, \cite[Lemma 4.1]{DSS08} and it is a special case of \cite[Proposition 3.2]{MeSc12}.  Such numbers $\theta$, $\eps$ exist if $p\geq n/4+1/2+\eps$, which is true if $p>\max\{1,n/2\}$ and provided that $\varepsilon>0$ is sufficiently small.  Furthermore, with the norm $\norm\cdot_p$ of $L_p(\bbR_+\times\Omega)$, the estimates
  \begin{align*}
    \norm {fg}_p &\leq \norm f_{2p} \norm g_{2p} \lesssim \norm f_{\bbE_u}\norm g_{\bbE_u}, \\
    \norm{(fg)_t}_p &\leq \norm {f_t}_{2p} \norm g_{2p} + \norm f_{2p}\norm{g_t}_{2p} \lesssim \norm f_{\bbE_u}\norm g_{\bbE_u}, \\
    \norm{(fg)_{tt}}_p &\leq \norm{f_{tt}}_p\norm g_\infty + 2 \norm{f_t}_{2p}\norm{g_t}_{2p} + \norm{f}_\infty\norm{g_{tt}}_{p} \lesssim \norm f_{\bbE_u}\norm g_{\bbE_u}
  \end{align*}
  imply that $(f,g)\mapsto fg : \bbE_u \times \bbE_u \to W^2_p(\bbR_+;L_p(\Omega))$ is bilinear, symmetric and bounded.  Using that $\omega \geq 0$ and $e^{\omega t}\leq e^{2\omega t}$, we see that $e^{-2\omega}L_p(\bbR_+\times\Omega) \hookrightarrow e^{-\omega}L_p(\bbR_+\times\Omega)$.  Setting $f=e^\omega u$, $g=e^\omega u_*$, $w=u+u_*$ and using
  \begin{align*}
    e^{2\omega t}(w^2)_{tt} &= ((e^{\omega t}w)^2)_{tt} -4\omega ((e^{\omega t}w)^2)_{t} +4\omega^2 (e^{\omega t}w)^2,
  \end{align*}
  we conclude that \eqref{eq:nonlinearity_1} is quadratic and continuous and thus analytic.

  In a similar way, we can check that
  \begin{align*}
    (u,u_*)\mapsto (\nabla u+\nabla u_*)^2 &: e^{-\omega}\0\bbE_{u,\Gamma}\times e^{-\omega}\bbE_{u}\to e^{-\omega}L_p(\bbR_+\times\Omega)
  \end{align*}
  is analytic, where we make use of the continuity of the embeddings
  \begin{align*}
    \bbE_u \hookrightarrow W^{1}_p(\bbR_+;W^{2}_p(\Omega)) &\hookrightarrow W^1_p(\bbR_+;W^1_{2p}(\Omega)) && \text{(valid for $p\geq n/2$)}\\
    &\hookrightarrow L_{2p}(\bbR_+;W^1_{2p}(\Omega)) &&\text{(valid for $p\geq 1/2$)}
  \end{align*}
  and the inequality
  \begin{align*}
    \norm{\nabla(fg)}_p \leq \norm {\nabla f}_{2p} \norm g_{2p} &+ \norm f_{2p}\norm{\nabla g}_{2p} \lesssim \norm f_{\bbE_u}\norm g_{\bbE_u}.
  \end{align*}

  To obtain the analyticity of
  \begin{align*}
    (u,v,u_*)\mapsto v\cdot(\nabla u+\nabla u_*)_t &: e^{-\omega}\0\bbE_{u,\Gamma} \times \bbE_v \times e^{-\omega}\bbE_{u}\to e^{-\omega}L_p(\bbR_+\times\Omega),
  \end{align*}
  we use $ e^{\omega t}\nabla w_t = \nabla( e^{\omega t}w)_t-\omega e^{\omega t}\nabla w$ and the estimate
  \begin{align*}
    \norm{v\cdot e^{\omega t}\nabla w_t}_p \leq \norm v_{L_\infty(\bbR_+;L_{2p}(\Omega))}\norm {e^{\omega t}\nabla w_t}_{L_p(\bbR_+;L_{2p}(\Omega))} \lesssim \norm v_{\bbE_v}\norm { e^{\omega t}w}_{\bbE_u},
  \end{align*}
  which is valid for $p\geq n/2$ since $W^1_p(\Omega)\hookrightarrow L_{2p}(\Omega)$ is continuous in this case.

%
The Fr\'{e}chet derivative of $H$ w. r. t. $(u,v)$ at $(0,0,0,0)$ is given by
$$D_{(u,v)}H(0,0,0,0)[\bar{u},\bar{v}] = \begin{pmatrix}\bar{u}_{tt} - c^2\Delta \bar{u}-b\Delta\bar{u}_t\\
\bar{v}_t+\rho_0^{-1}\nabla\bar{u}\\
\bar{v}(0)\end{pmatrix}.$$
We will now show that $D_{(u,v)}H(0,0,0,0) : \bbE\to\bbF$ is an isomorphism, where
\begin{align*}
  \bbE := e^{-\omega}\0\bbE_{u,\Gamma} \times \bbE_v, \quad \bbF := e^{-\omega} L_p(\mathbb{R}_+\times\Omega)\times e^{-\omega}\bbE_{v_t}\times W_p^1(\Omega)^n.
\end{align*}
To this end let $f=(f_1,f_2,f_3)\in \bbF$ and consider the system $$D_{(u,v)}H(0,0,0,0)[\bar{u},\bar{v}]=f.$$ By \cite[Theorem 2.5]{MeyWi11} there exists a unique solution $\bar{u}\in e^{-\omega}\0\bbE_{u,\Gamma}$ of the first equation. Inserting this solution into the second equation yields $v_t(t)=-\rho_0^{-1}\nabla\bar{u}(t)+f_2(t)$.
Integrating w. r. t. $t$ and invoking the initial condition $\bar{v}(0)=f_3$, we obtain
$$\bar{v}(t)=-\int_0^t \rho_0^{-1}\nabla\bar{u}(s) ds+\int_0^tf_2(s) ds+f_3.$$
This function belongs to $\bbE_v$, as can be seen from
\begin{align*}
  \nabla\bar{u} &\in e^{-\omega}W_p^1(\mathbb{R}_+;W_p^1(\Omega)^n) \hookrightarrow e^{-\omega} BUC(\mathbb{R}_+;W_p^1(\Omega)^n)=e^{-\omega}\mathbb{E}_{v_t}.\qedhere
\end{align*}
\end{proof}

\begin{proof}[Proof of \prettyref{thm:GLOBWP}]
It suffices to consider the case $J=\bbR_+$, since the considered function spaces over $J=(0,T)$ can be identified with subspaces of the corresponding spaces over $\bbR_+$ by means of extension and restriction, see \cite[Theorem 5.19]{AdFo03} for the scalar-valued case and \cite[Lemma 2.5]{MeSc12} for the vector-valued case.

As a consequence of \prettyref{lem:Nonlinearity_is_analytic} and since $H(0,0,0,0)=(0,0,0)$,  the implicit function theorem yields a (possibly small) ball $B_\rho(0)\subset e^{-\omega}\bbE_u\times W_p^1(\Omega)^n$ and an analytic mapping $$\psi:B_\rho(0)\subset e^{-\omega}\bbE_u\times W_p^1(\Omega)^n\to e^{-\omega}\0\bbE_{u,\Gamma}\times\bbE_v, \quad (u_*,v_0) \mapsto (u,v)=\psi(u_*,v_0)$$ with $\psi(0,0)=(0,0)$ such that
\begin{align*}
  H(\psi(u_*,v_0),(u_*,v_0))=(0,0,0) \quad\text{ for all } (u_*,v_0)\in B_\rho(0) \subset e^{-\omega}\bbE_u\times W_p^1(\Omega)^n.
\end{align*}
Using that $v_t = -\rho_0^{-1}\nabla (u+u_*)$, we may replace $\bbE_v$ by the smaller space
\begin{align*}
  \{v \in BUC^1(\mathbb{R}_+;W_p^1(\Omega)^n) : e^{\omega t}v_t \in H^{3/2}_p(\mathbb{R}_+;L_p(\Omega)^n)\cap W^1_p(\bbR_+;W_p^1(\Omega)^n))\},
\end{align*}
(which is the same as in \eqref{eq:RegSol}), since the gradient
\begin{align*}
  \nabla : H^{3/2}_p(\bbR_+;W^1_p(\Omega)) \cap W^1_p(\bbR_+;W^2_p(\Omega)) \to H^{3/2}_p(\bbR_+;L_p(\Omega)) \cap W^1_p(\bbR_+;W^1_p(\Omega))
\end{align*}
is continuous and the embedding
\begin{align*}
  \bbE_u \hookrightarrow H^{3/2}_p(\bbR_+;W^1_p(\Omega)) \cap W^1_p(\bbR_+;W^2_p(\Omega))
\end{align*}
is valid by the mixed derivative theorem \cite[Proposition 3.2]{MeSc12}.
This means that the pair $(u+u_*,v):=\psi(u_*,v_0)+(u_*,0)$ solves the main problem \eqref{prob:Kuznetsov} for Kuznetsov's equation, whenever $(u_*,v_0)$ is small enough and $u_*$ satisfies the prescribed boundary condition $u_*|_\Gamma=g$ and initial conditions $u_*|_{t=0}=u_0$, $u_*|_{t=0}=u_1$.
We therefore define $u_*\in e^{-\omega}\bbE_u$ as the unique solution to \eqref{prob:Kuznetsov_Linearized} due to \prettyref{lem:Max_Reg_for_Linearized_Problem} with $(f=0,g,u_0,u_1)$.  We introduce the Banach function spaces
\begin{align}\label{eq:space_Y}
  \widetilde\bbY &:= e^{-\omega}\bbF_g \times W_p^2(\Omega) \times W_p^{2-2/p}(\Omega), \nonumber \\
  \mathbb{Y} &:= \begin{cases}
  \{(g,u_0,u_1)\in\widetilde\bbY : g|_{t=0}=u_0|_{\Gamma}\},\ &\text{if}\ p<3/2,\\
  \{(g,u_0,u_1)\in\widetilde\bbY : g|_{t=0}=u_0|_{\Gamma},\ g_t|_{t=0}=u_1|_{\Gamma}\},\ &\text{if}\ p>3/2,
  \end{cases}
\end{align}
with norm $\norm\cdot_\bbY=\norm\cdot_{\widetilde\bbY}$.  Maximal regularity implies that $u_*$ depends linearly and continuously on $(g,u_0,u_1)\in\bbY$ and thus satisfies the estimate
$$\norm{u_*}_{e^{-\omega}\bbE_u}\lesssim \norm {(g,u_0,u_1)}_\bbY = \|g\|_{e^{-\omega}\bbF_g}+\|u_0\|_{W_p^2}+\|u_1\|_{W_p^{2-2/p}}.$$
Since $\psi$ is analytic on $B_\rho(0)$ w. r. t. $(u_*,v_0)$, 
it follows that $(u,v)\in\0\bbE_{u,\Gamma}\times\bbE_v$ depends analytically on $(g,u_0,u_1)\in \bbY$ and $v_0\in W^1_p(\Omega)$ in a neighborhood of zero.

%

By a basic embedding and by the temporal trace theorem, we obtain
\begin{align*}
  w:=u+u_* &\in e^{-\omega}BUC(\bbR_+;W^2_p(\Omega)), &\quad \norm{w(t)}_{W^2_p} &\leq C e^{-\omega t},\\
  w_t=u_t+(u_*)_t &\in e^{-\omega}BUC(\bbR_+;W^{2-2/p}_p(\Omega)),&\quad \norm{w_t(t)}_{W^{2-2/p}_p} &\leq C e^{-\omega t},
\end{align*}
where $C \geq \max\{ \norm{w}_{e^{-\omega}BUC(\bbR_+;W^2_p)}, \norm{w_t}_{e^{-\omega}BUC(\bbR_+;W^{2-2/p}_p)} \}$.  The representation
\begin{align*}
  e^{\omega t}(v(t) - v_\infty) &=  -\rho_0^{-1}\int_t^\infty e^{\omega t}\nabla w(s)ds = -\rho_0^{-1} \left((e^{-\omega s}\chi_{\bbR_+}(s))\ast(e^{\omega s}\nabla w(s))\right)(t)
\end{align*}
shows that also $v(t)\to v_\infty$ in $W_p^1(\Omega)^n$  and $v_t(t)\to 0$ in $W_p^1(\Omega)^n$ exponentially, as $t\to\infty$.  Furthermore, if $p > 2$, then also $v_{tt}=-\rho_0^{-1}\nabla w_t\to 0$ in $W_p^{1-2/p}(\Omega)^n$ exponentially.  This concludes the proof of Theorem \ref{thm:GLOBWP}.
\end{proof}

\section{Higher regularity}

In this section we establish higher temporal regularity for the solution $(u^*,v^*)$ to the initial-boundary value problem \eqref{prob:Kuznetsov} for Kuznetsov's equation for given boundary data $g$.  We employ the parameter trick of Angenent \cite{Ang90}, where an artificial parameter $\lambda\in(1-\eps,1+\eps)$ is introduced by
\begin{align*}
  u_\lambda(t,x):=u^*(\lambda t,x), \quad v_\lambda(t,x) := v^*(\lambda t,x), \quad g_\lambda(t,x) := g(\lambda t,x).
\end{align*}
It is then rather easy to prove that the equations in \eqref{prob:Kuznetsov} depend $C^k$-differentiably on $\lambda$, provided that $g$ satisfies an appropriate regularity condition \eqref{eq:higher_regularity_for_g}.  If we can establish that this function is also $C^k$ w. r. t. $\lambda$, we obtain for instance
\begin{align*}
  \left.\del_\lambda^j u_\lambda(t,x)\right|_{\lambda=1} = t^j \del_t^j u^*(t,x), \quad \text{for all }j\leq k,
\end{align*}
which implies that $u^*$ gains temporal regularity on every interval $(\delta,\infty)$, $\delta>0$.

We note that the transformation $T_\lambda : f(\cdot,\cdot) \mapsto f(\lambda\cdot,\cdot)$ is a bijection of $\bbX$, where $\bbX$ denotes one of the spaces $L_p(\bbR_+\times\Omega)$, $\bbE_1$, $\bbF_\Gamma$, $\bbE_u$, $\bbF_g$ on $\bbR_+$.  This property follows from identities like $\norm{f(\lambda\cdot)}_{L_p(\bbR_+)} = \lambda^{-1/p}\norm{f}_{L_p(\bbR_+)}$ and $\norm{\del_t(f(\lambda\cdot))}_{L_p(\bbR_+)} = \lambda^{1-1/p}\norm{f}_{L_p(\bbR_+)}$.
However, this is not the case when dealing with exponential weights. Here we obtain for instance $\norm{e^{\lambda\omega \cdot}f(\lambda\cdot)}_{L_p(\bbR_+)} = \lambda^{-1/p} \norm{e^{\omega \cdot}f}_{L_p(\bbR_+)}$ and therefore $T_\lambda : e^{-\omega} \bbX \to e^{-\lambda\omega}\bbX$ is bijective.  This is the reason why we require $\omega_g>\omega$ in \prettyref{thm:regularity_and_decay}, which implies a faster decay of $g$.

\begin{proof}[Proof of \prettyref{thm:regularity_and_decay}]
Let us start with the unique solution $(u^*,v^*)\in e^{-\omega}\bbE_u\times \bbE_v$ of \eqref{prob:Kuznetsov}, which is obtained from \prettyref{thm:GLOBWP}. For $\lambda\in (1-\eps,1+\eps)$ for a sufficiently small $\varepsilon>0$ we define the scaled functions $u_\lambda$, $v_\lambda$, $g_\lambda$ as above. It follows that
\begin{align*}
  \partial_{t}^j u_\lambda(t,x) = \lambda^j \del_t^j u^*(\lambda t,x), \quad \partial_t^2(u_\lambda^2(t,x))=\lambda^2((u^*)^2)_{tt}(\lambda t,x),
\end{align*}
and analogous relations are valid for $v_\lambda$.  This yields that $(u_\lambda,v_\lambda)$ solves the problem
\begin{align}\label{prob:Kuznetsov-scaled}
   \left\{\begin{aligned}
      &\partial_t^2u_{\lambda} - \lambda^2c^2\Delta u_\lambda - \lambda b\Delta \partial_t u_\lambda \\
      &= k\partial_t^2(u_\lambda^2)+2\lambda(v_\lambda\cdot \nabla \partial_t u_\lambda)+2\rho_0^{-1}\lambda^2(\nabla u_\lambda)^2 &&\quad\text{in }J\times\Omega,\\
       \partial_t v_\lambda&=-\lambda\rho_0^{-1}\nabla u_\lambda &&\quad\text{in }J\times\Omega,\\
      u_\lambda|_{\Gamma} &= g_\lambda &&\quad\text{in }J\times\Gamma,\\
      u_\lambda(0) &= u_0 &&\quad\text{in }\Omega,\\
      \partial_t u_\lambda(0) &= \lambda u_1 &&\quad\text{in }\Omega,\\
      v_\lambda(0) &= v_0 &&\quad\text{in }\Omega.
   \end{aligned}\right.
\end{align}


\begin{lem}\label{lem:Nonlinearity_for_higher_regularity_is_analytic}
  Let 
  $p\in(1,\infty)$, $p>\max\{1,n/2\}$, $p\neq 3/2$ and let $\omega\in(0,\omega_0)$ have the same meaning as in \prettyref{lem:Max_Reg_for_Linearized_Problem}.  Let $(g,u_0,u_1)\in\bbY$ (defined in \eqref{eq:space_Y}), $v_0\in W^1_p(\Omega)^n$ and suppose that there exists $\omega_g>\omega$ such that
  \begin{align}\label{eq:higher_regularity_for_g}
    g, \, [t\mapsto t \del_t g(t)], \,\ldots\,, [t\mapsto t^k \del_t^k g(t)] \in e^{-\omega_g}\bbF_g \quad\text{for some }k\geq 1.
  \end{align}
  Let $\bbE_u$, $\bbE_v$, $\bbE_{v_t}$ be the same spaces as in \eqref{eq:RegSol_2}.  Then the mapping
  \begin{align*}
    H:(1-\varepsilon,1+\varepsilon)\times e^{-\omega}\bbE_u\times \bbE_v \to e^{-\omega}L_p(\mathbb{R}_+\times\Omega)\times \bbE_{v_t} \times\mathbb{Y}\times W_p^1(\Omega)^n,
  \end{align*}
  defined by
  \begin{align*}
    &H(\lambda,u,v) :=\begin{pmatrix}\partial_t^2u - \lambda^2c^2\Delta u - \lambda b\Delta \partial_t u - k\partial_t^2(u^2)-2\lambda v{\cdot} \nabla \partial_t u-2\rho_0^{-1}\lambda^2(\nabla u)^2\\
\partial_t v+\lambda \rho_0^{-1}\nabla u\\
u|_{\Gamma}-g_\lambda\\
u(0)-u_0\\
u_t(0)-\lambda u_1\\
v(0)-v_0
\end{pmatrix}.
  \end{align*}
  is $C^k$ and there exists $\rho>0$ such that the first Fr\'{e}chet derivative of $H$ w. r. t. $(u,v)$ at $(1,u^*,v^*)$ is invertible, provided that
  \begin{align*}
    \norm{(g,u_0,u_1)}_\bbY + \norm {v_0}_{W^1_p(\Omega)} < \rho.
  \end{align*}
\end{lem}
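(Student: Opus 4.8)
The plan is to split $H=(H_1,\dots,H_6)$ into its six components and to separate the (easy) $(u,v)$-dependence from the (delicate) $\lambda$-dependence. For fixed $\lambda$ the components $H_1$, $H_2$ are exactly the nonlinearity of \prettyref{lem:Nonlinearity_is_analytic}, only with the extra scalar coefficients $\lambda^2c^2$, $\lambda b$, $\lambda$, $\lambda^2$, which are polynomials in $\lambda$; hence the mapping properties and (bi)linear estimates established there --- now read over $e^{-\omega}\bbE_u$, with the single $u$ playing the role of $u+u_*$ (harmless, since $\0\bbE_{u,\Gamma}\subset\bbE_u$) --- show that $(\lambda,u,v)\mapsto(H_1,H_2)$ is analytic in $(u,v)$ and polynomial, hence $C^\infty$, in $\lambda$. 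The components $H_4=u(0)-u_0$, $H_6=v(0)-v_0$ are affine, $H_5=u_t(0)-\lambda u_1$ is affine and linear in $\lambda$, and $H_3=u|_\Gamma-g_\lambda$ is affine in $u$; so the only genuine issue is the claim that $\lambda\mapsto g_\lambda:=g(\lambda\cdot,\cdot)$ is $C^k$ from $(1-\eps,1+\eps)$ into $e^{-\omega}\bbF_g$. Granting this, $H$ is jointly $C^k$, and one also checks --- using $g(0)=u_0|_\Gamma$, $\del_t g(0)=u_1|_\Gamma$ (for $p>3/2$) together with $g_\lambda(0)=g(0)$ and $\del_t g_\lambda(0)=\lambda\del_t g(0)$ --- that the triple $(H_3,H_4,H_5)$ satisfies the compatibility conditions defining $\bbY$, so that $H$ is well defined into the stated codomain.

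For the scaling claim I would argue as in the discussion preceding the lemma. Write $T_\sigma h:=h(\sigma\cdot,\cdot)$. After fixing $\eps$ so small that $(1-\eps)\omega_g>\omega$, the operators $T_\sigma$ are \emph{uniformly} bounded from $e^{-\omega_g}\bbF_g$ into $e^{-\omega}\bbF_g$ for $\sigma\in[1-\eps,1+\eps]$: indeed $\|e^{\sigma\omega_g\cdot}T_\sigma h\|_{\bbF_g}=\|T_\sigma(e^{\omega_g\cdot}h)\|_{\bbF_g}$ and the powers of $\sigma$ picked up by $T_\sigma$ on $\bbF_g$ stay bounded, while the embedding $e^{-\sigma\omega_g}\bbF_g\hookrightarrow e^{-\omega}\bbF_g$ amounts to multiplication by $e^{-(\sigma\omega_g-\omega)t}$, a multiplier whose norm on the underlying $W^s_p(\bbR_+)$-spaces is bounded since $\sigma\omega_g-\omega$ stays in a compact subinterval of $(0,\infty)$. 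A direct computation gives $\del_\lambda^j g_\lambda=\lambda^{-j}\,T_\lambda[t^j\del_t^j g]$, and by \eqref{eq:higher_regularity_for_g} we have $t^j\del_t^j g\in e^{-\omega_g}\bbF_g$ for $j\le k$, so these candidate derivatives all lie in $e^{-\omega}\bbF_g$. Differentiability and continuity of these derivatives then follow by induction on $j$ from the fundamental theorem of calculus in $\lambda$ together with continuity of $\sigma\mapsto T_\sigma h$ into $e^{-\omega}\bbF_g$ for $h\in e^{-\omega_g}\bbF_g$: the latter is obvious when $h$ is smooth with compact support in time and values in a smooth space on $\Gamma$, and such $h$ are dense, so it holds in general by the uniform boundedness just established; the identity $(g_{\lambda+\eta}-g_\lambda)/\eta=\int_0^1(\lambda+s\eta)^{-1}T_{\lambda+s\eta}[t\del_t g]\,ds$ and dominated convergence then yield $\del_\lambda g_\lambda=\lambda^{-1}T_\lambda[t\del_t g]$, and iterating (using $t\del_t(t^j\del_t^j g)=j\,t^j\del_t^j g+t^{j+1}\del_t^{j+1}g\in e^{-\omega_g}\bbF_g$ at the $j$-th step) produces all derivatives up to order $k$. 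This is precisely where the hypothesis $\omega_g>\omega$ is used, and it is the main obstacle of the whole proof; the rest is routine.

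It remains to establish invertibility of $L:=D_{(u,v)}H(1,u^*,v^*):\bbE\to\bbF$, where $\bbE:=e^{-\omega}\bbE_u\times\bbE_v$ and $\bbF:=e^{-\omega}L_p(\bbR_+\times\Omega)\times e^{-\omega}\bbE_{v_t}\times\bbY\times W^1_p(\Omega)^n$ is the codomain of $H$. I would do this by perturbation: writing $L=L_0+B$ with $L_0:=D_{(u,v)}H(1,0,0)$, one has
\[
  L_0[\bar u,\bar v]=\bigl(\bar u_{tt}-c^2\Delta\bar u-b\Delta\bar u_t,\ \bar v_t+\rho_0^{-1}\nabla\bar u,\ \bar u|_\Gamma,\ \bar u(0),\ \bar u_t(0),\ \bar v(0)\bigr),
\]
while $B[\bar u,\bar v]$ collects the linearizations of the nonlinear terms at $(1,u^*,v^*)$ --- which affect only the first slot --- namely $-2k\del_t^2(u^*\bar u)-2\,v^*{\cdot}\nabla\del_t\bar u-2\,\bar v{\cdot}\nabla\del_t u^*-4\rho_0^{-1}\nabla u^*{\cdot}\nabla\bar u$. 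The operator $L_0$ is an isomorphism: given $(f_1,f_2,(h,a_0,a_1),f_3)\in\bbF$, \prettyref{lem:Max_Reg_for_Linearized_Problem} (whose compatibility hypotheses are precisely $(h,a_0,a_1)\in\bbY$) produces a unique $\bar u\in e^{-\omega}\bbE_u$ with $\bar u_{tt}-c^2\Delta\bar u-b\Delta\bar u_t=f_1$, $\bar u|_\Gamma=h$, $\bar u(0)=a_0$, $\bar u_t(0)=a_1$, and then $\bar v(t):=f_3+\int_0^t(-\rho_0^{-1}\nabla\bar u+f_2)\,ds$ is the unique preimage in $\bbE_v$, exactly as at the end of the proof of \prettyref{lem:Nonlinearity_is_analytic}. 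The operator $B$ is bounded and, by the same (weighted) bilinear estimates used in \prettyref{lem:Nonlinearity_is_analytic} (e.g.\ $e^{2\omega t}\del_t^2(u^*\bar u)=\del_t^2((e^{\omega t}u^*)(e^{\omega t}\bar u))$ together with $e^{-2\omega}L_p\hookrightarrow e^{-\omega}L_p$), satisfies $\|B\|_{\mathcal{L}(\bbE,\bbF)}\lesssim\|u^*\|_{e^{-\omega}\bbE_u}+\|v^*\|_{\bbE_v}$. By the continuous dependence statement of \prettyref{thm:GLOBWP}, this last quantity is $\lesssim\|(g,u_0,u_1)\|_\bbY+\|v_0\|_{W^1_p}$, so choosing $\rho>0$ small enough (and at most the $\rho$ of \prettyref{thm:GLOBWP}) forces $\|L_0^{-1}B\|_{\mathcal{L}(\bbE)}<1$; consequently $L=L_0(I+L_0^{-1}B)$ is invertible by a Neumann series, which completes the proof.
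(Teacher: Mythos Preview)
Your proof is correct and follows essentially the same approach as the paper's: reduce to Lemma~\ref{lem:Nonlinearity_is_analytic} for the $(u,v)$-dependence and the polynomial $\lambda$-dependence, isolate the $C^k$-regularity of $\lambda\mapsto g_\lambda$ via $\partial_\lambda^j g_\lambda=t^j\partial_t^j g(\lambda t,\cdot)$ together with the assumption \eqref{eq:higher_regularity_for_g}, and establish invertibility of $D_{(u,v)}H(1,u^*,v^*)$ by a Neumann series perturbation off the case $(u^*,v^*)=(0,0)$. Your treatment is in fact somewhat more detailed than the paper's in two places---you justify the continuity of $\sigma\mapsto T_\sigma h$ via uniform boundedness and density (the paper simply asserts the formula for $\partial_\lambda^j g_\lambda$), and you correctly invoke \prettyref{lem:Max_Reg_for_Linearized_Problem} for the invertibility of $L_0$ with \emph{inhomogeneous} boundary and initial data, which is what is needed here since the domain is $e^{-\omega}\bbE_u$ rather than $e^{-\omega}\0\bbE_{u,\Gamma}$.
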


\begin{proof}
For every $\lambda$, the mapping $(u,v)\mapsto H(\lambda,u,v)$ is analytic by \prettyref{lem:Nonlinearity_is_analytic}.  To obtain the differentiability w. r. t. $\lambda$, we compute
$$\del_\lambda H(\lambda,u,v):=\begin{pmatrix}-2\lambda c^2\Delta u-b\Delta \partial_t u - 2v\cdot \nabla \partial_t u-4\rho_0^{-1}\lambda(\nabla u)^2\\
\rho_0^{-1}\nabla u\\
-\del_\lambda g_\lambda\\
0\\
-u_1\\
0
\end{pmatrix}.$$
From this formula we infer that $\lambda\mapsto \del^j_\lambda H(\lambda,u,v)$, $j\leq k$, is well-defined and continuous, since $g$ satisfies \eqref{eq:higher_regularity_for_g} and $\del_\lambda^j g_\lambda(t,x) = t^j \del_t^j g(\lambda t,x)$ and the functions $(\del_\lambda g_\lambda,0,u_1)$ satisfy the relevant compatibility conditions in the definition of $\bbY$, since
\begin{align*}
  \del_\lambda g_\lambda|_{t=0}=[tg_t(\lambda t)]|_{t=0}=0, \quad (\del_\lambda g_\lambda)_t|_{t=0}=g_t(\lambda t)|_{t=0}+[t g_{tt}(\lambda t)]|_{t=0}=u_1|_{\partial\Omega}.
\end{align*}
The derivative of $H$ w.r.t\ $(u,v)$ at $(1,u^*,v^*)$ reads as follows
\begin{multline*}
D_{(u,v)}H(1,u^*,v^*)[\bar{u},\bar{v}]=\\
=\begin{pmatrix}\partial_t^2\bar{u} - c^2\Delta \bar{u} - b\Delta \partial_t \bar{u} - 2k\partial_t^2(u^*\bar{u})-2\bar{v}{\cdot} \nabla \partial_t u^*-2v^*{\cdot} \nabla \partial_t \bar{u}-2\rho_0^{-1}\nabla u^*{\cdot}\nabla\bar{u}\\
\partial_t \bar{v}+ \rho_0^{-1}\nabla\bar{u}\\
\bar{u}|_{\partial\Omega}\\
\bar{u}(0)\\
\bar{u}_t(0)\\
\bar{v}(0)
\end{pmatrix}.
\end{multline*}
The fact that $D_{(u,v)}H(1,u^*,v^*)$ is an isomorphism 
follows from a Neumann series argument. Indeed, if the norms of the data $(g,u_0,u_1,v_0)\in\mathbb{Y}\times W_p^1(\Omega)^n$ are sufficiently small, then the coefficients of the terms involving $(u^*,v^*)$ in the first component are small as well, since $(u^*,v^*)\in e^{-\omega}\mathbb{E}_u\times\mathbb{E}_v$ depends continuously on the data $(g,u_0,u_1,v_0)\in\mathbb{Y}\times W_p^1(\Omega)^n$.
\end{proof}

Since $H(1,u^*,v^*)=(0,0,0,0,0,0)$, the implicit function theorem yields a (possibly) small number $\rho\in (0,\varepsilon)$ and a mapping $\psi\in C^k((1-\rho,1+\rho);e^{-\omega}\bbE_u\times \bbE_v)$ with $\psi(1)=(u^*,v^*)$ such that $H(\lambda,\psi(\lambda))=(0,0,0,0,0,0)$ for each $\lambda\in (1-\rho,1+\rho)$. Here again $\bbE_v$ may be taken to be the space in \eqref{eq:RegSol}.  By uniqueness it follows that $(u_\lambda,v_\lambda)=\psi(\lambda)$ for each $\lambda\in (1-\rho,1+\rho)$ and therefore
$$\Big[t\mapsto (t^j \partial_t^{j}u^*(t),t^{j}\partial_t^{j}v^*(t))=[\psi^{(j)}(1)](t)\Big]\in e^{-\omega}\bbE_u\times \bbE_v,$$
hence
$$(\partial_t^{j}u^*,\partial_t^{j}v^*)\in e^{-\omega}\bbE_u([\delta,\infty))\times \bbE_v([\delta,\infty)),$$
for each $\delta>0$ and $j\in\{0,\ldots, k\}$. In particular this yields that $\partial_t^{j}u^*(t)\to 0$ in $W_p^{2}(\Omega)$ for $j\in\{0,\ldots,k\}$, $\partial_t^{(k+1)}u^*(t)\to 0$ in $W_p^{2-2/p}(\Omega)$, and $\partial_t^{j}v^*(t)\to 0$ in $W_p^1(\Omega)^n$ for $j\in\{1,\ldots,k+1\}$ as $t\to\infty$ at the exponential rate $\omega>0$. The proof of \prettyref{thm:regularity_and_decay} is complete.
\end{proof}

\begin{proof}[Proof of \prettyref{cor:g_finite_time_interval}]
  Using \prettyref{thm:GLOBWP}, we solve \eqref{prob:Kuznetsov} on $\bbR_+$.  By the temporal trace theorem we see that $u(T)\in W^2_p(\Omega)$, $u_t(T)\in W^{2-2/p}_p(\Omega)$ and $v(T)\in W^1_p(\Omega)^n$ depend continuously on $(u,v)$ and thus continuously on the data $(g,u_0,u_1,v_0)$.  We choose $\tilde\rho\leq \rho$ sufficiently small such that
  \begin{align*}
    \norm{u(T)}_{W^2_p}+\norm{u_t(T)}_{W^{2-2/p}_p}+\norm{v(T)}_{W^1_p}\leq \rho.
  \end{align*}
  Using a translation $u(\cdot) \mapsto u(\cdot-T)$ and applying \prettyref{thm:regularity_and_decay} we obtain the assertion.
\end{proof}
\bibliographystyle{plain}
\bibliography{MeWi11_Lit}

\end{document}